\newtheorem{theorem}{Theorem}[section]
\newtheorem{lemma}[theorem]{Lemma}
\newtheorem{remark}{Remark}
\newenvironment{vf}{\left\{\begin{array}{rcl}}{\end{array}\right.}
\newcommand{\includegraph}[2][]{\ifnum\pdfoutput=0\includegraphics[#1]{#2.eps}\else\includegraphics[#1]{#2.pdf}\fi}
\author[1]{Peter De Maesschalck}
\author[2]{Renato Huzak}
\author[3]{Ansfried Janssens\footnote{Corresponding author, {\tt ansfried.janssens@uhasselt.be}}}
\author[4]{Goran Radunovi\'{c}}
\affil[1,2,3]{Hasselt University, Campus Diepenbeek, Agoralaan Gebouw D, 3590 Diepenbeek, Belgium}
\affil[4]{University of Zagreb, Faculty of Science, Horvatovac 102a, 10000 Zagreb, Croatia}
\title{Minkowski dimension and slow-fast polynomial Li\'{e}nard equations near infinity }
\date{}
\begin{document}
\maketitle

\begin{abstract}
In planar slow-fast systems, fractal analysis of (bounded) sequences in $\mathbb R$ has proved important for detection of the first non-zero Lyapunov quantity in singular Hopf bifurcations, determination of the maximum number of limit cycles produced by slow-fast cycles, defined in the finite plane, etc. One uses the notion of Minkowski dimension of sequences generated by slow relation function. Following a similar approach, together with Poincar\'{e}--Lyapunov compactification, in this paper we focus on a fractal analysis near
infinity of the slow-fast generalized Li\'{e}nard equations $\dot x=y-\sum_{k=0}^{n+1} B_kx^k,\ \dot y=-\epsilon\sum_{k=0}^{m}A_kx^k$. We extend the definition of the Minkowski
dimension to unbounded sequences. This helps us better understand the fractal nature of slow-fast cycles that are detected inside the slow-fast  Li\'{e}nard equations and contain a part at infinity.
\end{abstract}
\textit{Keywords:} Minkowski dimension; Poincar\'{e}--Lyapunov compactification; slow-fast  Li\'{e}nard equations; slow relation function\newline
\textit{2020 Mathematics Subject Classification:} 34E15, 34E17, 34C40, 28A80, 28A75

\tableofcontents

\section{Introduction}\label{Section-Introduction}
In this paper we give a fractal classification near infinity of the slow-fast Li\'{e}nard equations
 \begin{equation}
\label{model-Lienard}
    \begin{vf}
        \dot{x} &=& y-\sum_{k=0}^{n+1} B_kx^k \\
        \dot{y} &=&-\epsilon\sum_{k=0}^{m}A_kx^k,
    \end{vf}
\end{equation}
where $\epsilon\ge 0$ is the singular perturbation parameter kept small, $m,n\in\mathbb N_1$, $A_m\ne 0$ and $B_{n+1}\ne 0$. Our fractal classification will be based on Poincar\'{e}--Lyapunov compactification \cite{DH} and the notion of Minkowski dimension \cite{Falconer,Goran} of monotone sequences, converging to infinity and generated by so-called slow relation function (i.e. entry-exit relation) \cite{Benoit,DM-entryexit}. 
\smallskip

After a rescaling $(x,y,t)=(aX,bY,cT)$, with $a$, $b$ and $c$ depending only on $A_m$ and $B_{n+1}$, we can bring \eqref{model-Lienard} into
\begin{equation}
\label{model-Lienard1}
    \begin{vf}
        \dot{x} &=& y-\left(x^{n+1}+\sum_{k=0}^{n} b_kx^k \right)\\
        \dot{y} &=&-\epsilon\left(Ax^m+\sum_{k=0}^{m-1}a_kx^k\right),
    \end{vf}
\end{equation}
where we denote $(X,Y,T)$ again by $(x,y,t)$, $A=1$ if $m$ is even and $m\ne 2n+1$, $A=\text{sign}(A_m)$ if $m$ is odd and $m\ne 2n+1$, and where $A\ne 0$ if $m=2n+1$. See also \cite{DH} where the phase portraits of \eqref{model-Lienard1} with $\epsilon=1$ have been studied near infinity. In our slow-fast setting we let $\epsilon$ go to zero (for more details see the rest of this section).
\smallskip

Let us first explain the basic idea of our fractal approach. In planar vector fields one often deals with monodromic limit periodic sets, accumulated by spiral trajectories: homoclinic loop, limit cycle, focus, etc. Such limit periodic sets can produce limit cycles after perturbation, and the maximum number of limit cycles (i.e. the cyclicity) is typically studied using Poincar\'{e} map and looking at its fixed points. In \cite{EZZ,MRZ,ZupZub} it has been observed that the cyclicity is closely related to the density of orbits of the Poincar\'{e} map that converge to a fixed point of the Poincar\'{e} map (the fixed point corresponds to a focus, limit cycle or a more complex limit periodic set). See Fig. \ref{fig-chirp1}(a). Here, an important fractal dimension of the orbits comes into play: the Minkowski dimension, often called the box dimension. The Minkowski dimension measures the density of orbits (roughly speaking, if the Minkowski dimension increases, then the density increases and more limit cycles can be born). The Minkowski dimension can often be computed by comparing the (estimated) length of
$\delta$-neighborhood of orbits, as $\delta\to 0$, with $\delta$, $\delta^{1/2}$, $\delta^{1/3},\dots$ (see \cite{EZZ}). For some other applications of the Minkowski dimension see \cite{BurFalFRa,BoxVesna,ISOCEN} and references therein.
\begin{figure}[htb]
	\begin{center}
		\includegraphics[width=9.2cm,height=3.3cm]{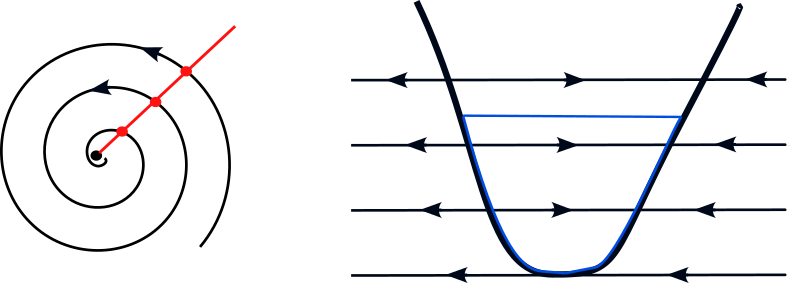}
		{\footnotesize
        \put(-80,-12){$(b)$}
        \put(-229,-12){$(a)$} \put(-196,70){$s_0$}
\put(-207,60){$s_1$}   \put(-217,48){$s_2$}     }
         \end{center}
	\caption{(a) The orbit $(s_l)_l$ of $s_0$ generated by the Poincar\'{e} map $P$ defined near a focus ($s_{l+1}=P(s_l)$). (b) A canard cycle (blue) consisting of a fast orbit, normally attracting and normally repelling portions of the curve of singularities and a contact point between them (see also Section \ref{sec-Motivation}).}
	\label{fig-chirp1}
\end{figure}

 In planar slow-fast setting we deal with degenerate limit periodic sets (containing a curve of singularities), defined at level $\epsilon=0$, where $\epsilon$ is the singular parameter (see \cite[Chapter 4]{DDR-book-SF}). Clearly, such limit periodic sets are non-monodromic and the Poincar\'{e} map is not defined for $\epsilon=0$ (Fig. \ref{fig-chirp1}(b)). Following \cite{BoxRenato,BoxDomagoj,BoxVlatko}, a natural candidate for a discrete one-dimensional dynamical system that generates orbits in the limit $\epsilon\to 0$ (instead of the Poincar\'{e} map) is the slow relation function/entry-exit relation computed along the curve of singularities (for precise definitions we refer to Section \ref{sec-Motivation}). In \cite{BoxRenato,BoxDomagoj,BoxVlatko} a fractal analysis of so-called canard limit periodic sets (simply called canard cycles) has been given. A canard cycle contains both attracting and repelling parts of the curve of singularities (see Fig. \ref{fig-chirp1}(b)). For more details about the definition of canard cycles, the reader is referred to \cite[Chapter 4]{DDR-book-SF}. From the Minkowski dimension of one orbit of the slow relation function, assigned to the canard cycle, we can read the number of limit cycles and type of bifurcations near the canard cycle (see \cite[Theorem 3]{BoxVlatko}). A fractal analysis of planar contact points (e.g. slow-fast Hopf point), based on the same slow relation function approach, can be found in \cite{BoxDarko,BoxNovo}. We point out that \cite{BoxNovo} gives a simple fractal method for detection of the first non-zero Lyapunov coefficient in slow-fast Hopf bifurcations. A fractal detection of the first non-zero Lyapunov coefficient in regular Hopf bifurcations has been studied in \cite{ZupZub}.
\smallskip

In the two cases mentioned above (canard cycles and contact points), we considered orbits, generated by the slow relation function, that converge monotonically to a canard cycle or a contact point (see also Section \ref{sec-Motivation}). Thus, the orbits in these two cases are bounded, and the main purpose of our paper is to study the Minkowski dimension of orbits, generated by the slow relation function inside the Li\'{e}nard family \eqref{model-Lienard1}, that monotonically go to infinity. Such orbits will be closely related to canard cycles inside \eqref{model-Lienard1}, at level $\epsilon=0$, containing a fast orbit located at infinity (see Fig. \ref{fig-caninf}). 
\smallskip

More precisely, we compute the Minkowski dimension of such unbounded orbits by using a quasi-homogeneous compactification  of \eqref{model-Lienard1} at infinity defined in \cite{DH}. We have the following three cases: $m<2n+1$, $m=2n+1$ and $m>2n+1$. When $m<2n+1$ or $m=2n+1$,  system \eqref{model-Lienard1} will be studied on the Poincar\'{e}--Lyapunov disc
of degree $(1, n+1)$. When $m>2n+1$, then we study \eqref{model-Lienard1} on the Poincar\'{e}--Lyapunov disc of degree $(2, m+1)$ (resp. of degree $(1, \frac{m+1}{2})$) for $m$ even (resp. $m$ odd). The reason for using these Poincar\'{e}--Lyapunov compactifications is two-fold. Firstly, we find the phase portraits of \eqref{model-Lienard1} near infinity and see that canard cycles (Fig. \ref{fig-caninf}), with a fast segment located at infinity, are possible only if $A>0$ and $m$ and $n$ are odd; and secondly, the Poincar\'{e}--Lyapunov compactifications enable us to (naturally) extend the definition of the Minkowski dimension to unbounded orbits attached to the canard cycles. We refer to \cite{GoranInf,Goran} for some other extensions (geometric inversion, tube function
at infinity, Lapidus zeta function at infinity, etc.) of the definition of the Minkowski dimension from
bounded sets in Euclidean spaces to unbounded sets. 
\smallskip

\begin{figure}[htb]
	\begin{center}
		\includegraphics[width=5.2cm,height=2.8cm]{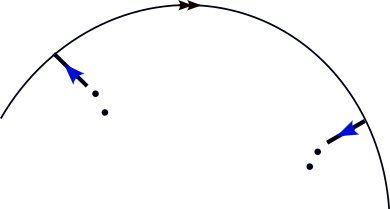}
         \end{center}
	\caption{In this paper we focus on the following behavior near infinity: a normally repelling portion of the curve of singularities with a slow dynamics (blue) directed towards the left corner, a fast orbit at infinity and a normally attracting portion with a slow dynamics (blue) directed away from the right corner. The corner points are normally hyperbolic when $m\le 2n+1$ and linearly zero when $m>2n+1$.}
	\label{fig-caninf}
\end{figure}

We point out that the shape of the curve of singularities of \eqref{model-Lienard1} can be very complex (i.e. canard cycles in Fig. \ref{fig-caninf} can be very diverse looking at their portion in the finite plane). In Section \ref{sec-Motivation} we state our main results for canard cycles that have the shape shown in Fig. \ref{fig-Motivation}. In Section \ref{sectionFAI} it will be clear that our fractal analysis near infinity can be applied to more general shapes, with a finite slow divergence integral in large compact sets in the phase space.   

We would like to stress that the phase portraits near infinity of \eqref{model-Lienard1} with $\epsilon=1$, studied in \cite{DH}, are qualitatively different from the phase portraits near infinity of \eqref{model-Lienard1} with $\epsilon\to 0$, presented in this paper. We therefore cannot use \cite{DH} to detect parts at infinity that may belong to the canard cycles for $\epsilon=0$. When $m>2n+1$, we need an additional blow-up to completely desingularize \eqref{model-Lienard1} near infinity, for $\epsilon\to 0$ (see Appendix \ref{appendix}). A special case with $(m,n)=(4,1)$ has been treated  in \cite{HuzakQuartic}.

\smallskip

Let $U\subset\mathbb R^N$ be a bounded set. Then we define the $\delta$-neighborhood of $U$:
$
U_\delta=\{x\in\mathbb R^N \ | \ d(x,U)\le\delta\}
$. We denote by $|U_\delta|$ the Lebesgue measure of $U_\delta$.
By the lower $s$-dimensional  Minkowski content of $U$, for $s\ge0$, we mean
$$
\mathcal M_*^s(U)=\liminf_{\delta\to 0}\frac{|U_\delta|}{\delta^{N-s}},
$$
and analogously for the upper $s$-dimensional Minkowski content $\mathcal M^{*s}(U)$ (we replace $\liminf_{\delta\to0}$ with $\limsup_{\delta\to0}$).
We define lower and upper Minkowski (or box) dimensions of $U$ as
$$
\underline\dim_BU=\inf\{s\ge0 \ | \ \mathcal M_*^s(U)=0\}, \ \overline\dim_BU=\inf\{s\ge0 \ | \ \mathcal M^{*s}(U)=0\}.
$$
If $\underline\dim_BU=\overline\dim_BU$, we call it the Minkowski dimension of $U$, and denote it by $\dim_BU$. For more details about the notion of Minkowski dimension  we refer the reader to  \cite{Falconer,tricot} and references therein. 
If $0<\mathcal M_*^d(U)\le\mathcal M^{*d}(U)<\infty$ for some $d$, then we say
 that $U$ is Minkowski nondegenerate. In this case we have $d=\dim_B U$. 
If $\Phi:U \subset \mathbb{R}^{N}\rightarrow \mathbb{R}^{N_1}$ is a bi-Lipschitz map (i.e., there exists a constant $\rho>0$ small enough such that
$\rho\left\|x-y\right\|\leq\left\|\Phi(x)-\Phi(y)\right\| \leq\frac{1}{\rho} \left\|x-y\right\|$,
for every $x,y\in U$), then $$\underline\dim_{B}U=\underline\dim_{B}\Phi(U), \ \overline\dim_{B}U=\overline\dim_{B}\Phi(U).$$
If $\Phi$ is a bi-Lipschitz map and $U$ is Minkowski nondegenerate, then $\Phi(U)$ is Minkowski nondegenerate (see \cite[Theorem 4.1]{ZuZuR^3}).
\smallskip

We describe a few basic notations  that we will use throughout this paper. When $(a_l)_{l\in\mathbb{N}}$ and $(b_l)_{l\in\mathbb{N}}$ are two sequences of positive real numbers converging to zero, we write $a_l\simeq b_l$, as $l\to\infty$,
if there exists a small positive constant $\rho$ such that $\frac{a_l}{b_l}\in[\rho,\frac{1}{\rho}]$ for all $l\in\mathbb{N}$. We use notation $I_-, J_-, \Phi_-,\dots$ (resp. $I_+, J_+, \Phi_+,\dots$) near attracting (resp. repelling) portions of curves of singularities.

\smallskip

The paper is organized as follows. In Section \ref{sec-Motivation} we motivate our fractal analysis of \eqref{model-Lienard1} and state the main results. In Section \ref{sec-statement}  we study the dynamics of \eqref{model-Lienard1} near infinity on Poincar\'{e}--Lyapunov disc (Section \ref{PLCdynamics}) and present the fractal analysis near infinity (Section \ref{sectionFAI}). We prove the main results in Section \ref{sec-proofs} using Section \ref{sectionFAI}.

\section{Motivation and statement of results}
\label{sec-Motivation}
In this section it is more convenient to write system \eqref{model-Lienard1} as
\begin{equation}
\label{model-Lienard2}
    \begin{vf}
        \dot{x} &=& y-F(x)\\
        \dot{y} &=&\epsilon G(x),
    \end{vf}
\end{equation}
where $F(x)=x^{n+1}+\sum_{k=0}^{n} b_kx^k$ and $G(x)=-Ax^m-\sum_{k=0}^{m-1}a_kx^k$. When $\epsilon=0$, system \eqref{model-Lienard2} has the curve of singularities $\mathcal C=\{y=F(x)\}$. Assume that 
\begin{equation}
\label{assum1}
F(0)=F'(0)=0, \qquad \frac{F'(x)}{x}>0, \ \forall x\in\mathbb{R}. 
\end{equation}
\begin{figure}[htb]
	\begin{center}
		\includegraphics[width=6.9cm,height=6.5cm]{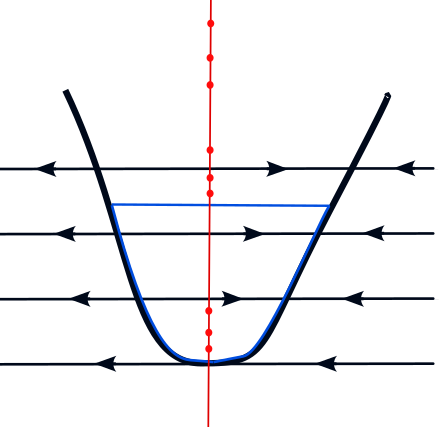}
		{\footnotesize
        \put(-29,148){$\mathcal C$}
        \put(-111,188){$x=0$}
        \put(-71,99){$\Gamma_{y_*}$}
        \put(-100,147){$y_0$}
        \put(-100,159){$y_1$}
        \put(-100,173){$y_2$}
\put(-100,119){$y_0$}
        \put(-100,106){$y_1$}
        \put(-100,99){$y_2$}
        \put(-101,49){$y_0$}
        \put(-101,40){$y_1$}
        \put(-101,33){$y_2$}
        \put(-62,62){$I_-$}
        \put(-150,62){$I_+$}
        \put(-44,94){$(\omega (y_*),y_*)$}
        \put(-191,94){$(\alpha (y_*),y_*)$}
             }
         \end{center}
 \caption{Orbits $U=\{y_0,y_1,\dots\}$ of $y_0$ by the slow relation function associated to \eqref{model-Lienard2} that monotonically converge to the contact point $\Gamma_{0}=\{(0,0)\}$ (see \cite{BoxNovo}), canard cycle $\Gamma_{y_*}$ in blue (see \cite{BoxRenato}) or $\Gamma_{\infty}$.}
	\label{fig-Motivation}
\end{figure}
The assumption in \eqref{assum1} implies that $n$ is odd, $b_0=b_1=0$ and that $\mathcal C$ has the general shape shown in Fig. \ref{fig-Motivation}. More precisely, the curve of singularities $\mathcal C$ consists of a normally attracting branch ($x>0$), a normally repelling branch ($x<0$), and a nilpotent contact point of Morse
type (i.e. of contact order $2$) at $(x,y)=(0,0)$. There are no other contacts with the horizontal fast foliation, and so-called slow dynamics of \eqref{model-Lienard2} along $\mathcal C$, $x\ne 0$, is therefore well defined:
\begin{equation}\label{slowdyn}
\frac{dx}{d\tau}=\frac{G(x)}{F'(x)}, 
\end{equation}
where $\tau=\epsilon t$ is the slow time ($t$ is the time in \eqref{model-Lienard2}). For more details about the notion of slow dynamics see \cite{DDR-book-SF,DHGener}. If we next assume that the slow dynamics \eqref{slowdyn} is regular for all $x\in\mathbb R$ and points from the attracting part of $\mathcal C$ to the repelling part of $\mathcal C$, i.e.
\begin{equation}
\label{assum2}
G(0)=0, \qquad \frac{G(x)}{x}<0, \ \forall x\in\mathbb{R}, 
\end{equation}
then it makes sense to study limit cycles of \eqref{model-Lienard2}, with $\epsilon>0$, Hausdorff close to canard cycle $\Gamma_{y_*}=\{(x,F(x)) \ | \ F(x)\le y_*\}\cup\{(x,y_*)\ | \ F(x)\le y_*\}$, defined for $y_*>0$ and $\epsilon=0$ (Fig. \ref{fig-Motivation}). From \eqref{assum2} it follows  that $m$ is odd, $a_0=0$ and $A>0$. Thus, $A=1$ if $m\ne 2n+1$ and $A>0$ if $m=2n+1$.
\smallskip

To study the number of limit cycles near $\Gamma_{y_*}$ we often use the slow divergence integral associated to $\Gamma_{y_*}$ (see e.g. \cite{DHGener}):
\[ I(y_*)=-\int_{\omega(y_*)}^{\alpha(y_*)}\frac{F'(x)^2}{G(x)}dx= I_-(y_*)-I_+(y_*)  \]
where 
\[I_-(y_*)=-\int_{\omega(y_*)}^{0}\frac{F'(x)^2}{G(x)}dx<0, \ I_+(y_*)=-\int_{\alpha(y_*)}^{0}\frac{F'(x)^2}{G(x)}dx<0,\]
$\omega(y_*)>0$, $F(\omega(y_*))=y_*$, $\alpha(y_*)<0$ and $F(\alpha(y_*))=y_*$ (Fig. \ref{fig-Motivation}). $I_-$ (resp. $I_+$) is the slow divergence
integral associated to the slow segment $[0,\omega(y_*)]$ (resp. $[\alpha(y_*),0]$). It is not difficult to see that $I_\pm'(y_*)<0$.
\smallskip

Following \cite{BoxRenato,BoxVlatko}, limit cycles near so-called balanced canard cycles can be studied using the Minkowski dimension. We say that $\Gamma_{y_*}$ is balanced if $I(y_*)=0$ (see \cite{DDR-book-SF}). Suppose that $\Gamma_{y_*}$ is balanced. Then the Implicit Function Theorem implies the existence of a function $S(y)$ such that $S(y_*)=y_*$ and 
\begin{equation}\label{slow-relation-def}
I_-(y)=I_+(S(y)),
\end{equation}
for all $y\sim y_*$ (we used the assumptions \eqref{assum1} and \eqref{assum2}). We call $S$ the slow relation function associated to the balanced canard cycle $\Gamma_{y_*}$. Let $y_0\sim y_*$ ($y_0\ne y_*$) be arbitrary and fixed and let $U$ be the orbit of $y_0$ by $S$, i.e. $U=\{y_l=S^l(y_0) \ | \ l\in\mathbb N\}$ where $S^l$ denotes $l$-fold composition of $S$. If we differentiate \eqref{slow-relation-def}, we get $S'>0$ (this implies that the orbit $U$ is monotone).  If $a_0=a_2=\dots=a_{m-1}=0$ and $b_1=b_3=\dots=b_n=0$ in \eqref{model-Lienard2}, then $I\equiv 0$, i.e. $S(y)=y$. Thus, in this case the orbit $U$ is a fixed point of $S$ and, therefore, cannot converge to $y_*$. 
\smallskip

Suppose now that $U$ (monotonically) converges to $y_*$ (when we break the above symmetry). Then $\dim_B U\in\{0,\frac{1}{2},\frac{2}{3}, \frac{3}{4},\dots\}$, and the balanced canard cycle $\Gamma_{y_*}$ can produce at most $l+1$ limit cycles if $\dim_B U=\frac{l-1}{l}$, with $l\in\mathbb N_1$ (see \cite[Theorem 3]{BoxVlatko}). This result gives a one-to-one correspondence between $\dim_B U$ and upper bounds for the number of limit cycles Hausdorff close to $\Gamma_{y_*}$. We point out that there exist simple formulas for numerical computation of $\dim_B U$ (see \cite[Section 7]{BoxVlatko}).
\smallskip

In \cite{BoxNovo} the fractal analysis of planar contact points has been given, under very general conditions. The assumptions \eqref{assum1} and \eqref{assum2} imply that the Li\'{e}nard system \eqref{model-Lienard2} has a slow-fast Hopf point at the origin $(x,y)=(0,0)$, the fractal analysis of which is covered by \cite{BoxNovo}. The function $S$ from \eqref{slow-relation-def} is well-defined for $y\sim 0$ and $y\ge 0$ ($I(0)=0,S(0)=0$) and, if we assume that the orbit $U$ of $y_0>0$ by $S$ converges to $0$ (Fig. \ref{fig-Motivation}), then $\dim_B U\in\{\frac{1}{3},\frac{3}{5}, \frac{5}{7},\dots\}$, and $\Gamma_{0}=(0,0)$ can produce at most $l$ limit cycles if $\dim_B U=\frac{2l-1}{2l+1}$, with $l\in\mathbb N_1$. For a precise statement of this result see \cite{BoxNovo}.
\smallskip

As discussed above, the idea was to generate a sequence, using $I_-(y_{l})=I_+(y_{l+1})$ (or $I_-(y_{l+1})=I_+(y_{l})$), which converges to the limit periodic set that we want to study. The following natural question arises: when do we have $y_l\to +\infty$ as $l\to +\infty$, and how do we define the Minkowski dimension of such an unbounded sequence? First, notice that 
\begin{equation}\label{asymptot}
\frac{F'(x)^2}{G(x)}=-\frac{(n+1)^2}{A}x^{2n-m}\left(1+o(1)\right), \ x\to \pm\infty.
\end{equation}
From \eqref{asymptot} and the definition of $I_\pm$ it follows that $I_\pm(y)\to-\infty$ as $y\to +\infty$ if $m<2n+1$ or $m=2n+1$, and that $I_\pm(y)$ converge to negative real numbers (we denote them by $I_\pm(+\infty)$) as $y\to +\infty$ if $m>2n+1$. Thus, if $m\le 2n+1$, then the function $S$ from \eqref{slow-relation-def} is well-defined for all $y\ge 0$ because $I_\pm$ are decreasing functions, $I_\pm(0)=0$ and $I_\pm(y)\to-\infty$ as $y\to +\infty$. If $U=\{y_l \ | \ l\in\mathbb N\}$ is the orbit of $y_0>0$ by $S$ such that $y_l\to +\infty$ as $l\to +\infty$, then we define the lower Minkowski dimension of $U$ by
\begin{equation}\label{def-dim-1}
\underline\dim_BU=\underline\dim_B\left\{\frac{1}{y_l^\frac{1}{n+1}} \ | \ l\in\mathbb N\right\},
\end{equation}
and analogously for the upper Minkowski dimension $\overline\dim_BU$. If $\underline\dim_BU=\overline\dim_BU$, then we write $\dim_BU$ and call it the Minkowski dimension of $U$.
\smallskip

If $m>2n+1$ and if we assume that
\begin{equation}\label{balanced-inf}
I_-(+\infty)-I_+(+\infty)=0,
\end{equation}
then $S$ is again well-defined for all $y\ge 0$ because $I_\pm$ are decreasing functions, $I_\pm(0)=0$ and $I_\pm(y)\to I_-(+\infty)$ as $y\to +\infty$. Similarly, if $U=\{y_l \ | \ l\in\mathbb N\}$ is the orbit of $y_0>0$ by $S$ with $y_l\to +\infty$ as $l\to +\infty$, then we define $\underline\dim_BU$ (and $\overline\dim_BU,\dim_BU$):
\begin{equation}\label{def-dim-2}
\underline\dim_BU=\underline\dim_B\left\{\frac{1}{y_l^\frac{2}{m+1}} \ | \ l\in\mathbb N\right\}.
\end{equation}
The definitions \eqref{def-dim-1} and \eqref{def-dim-2} do not depend on the choice of the initial point $y_0$ (see Theorems \ref{thm-1}--\ref{thm-3} stated below). Notice that, instead of computing the Minkowski dimension of the bounded sequence $\left(\frac{1}{y_l}\right)_{l\in\mathbb N}$, it is more natural to use the exponents of $y_l$ given in \eqref{def-dim-1} and \eqref{def-dim-2} which are related to degree of the Poincar\'{e}--Lyapunov disc. The sequences in \eqref{def-dim-1} and \eqref{def-dim-2} are obtained after the corresponding transformation in the positive $y$-direction (see Section \ref{sectionFAI}). This will simplify the computation of the Minkowski dimension in Theorems \ref{thm-1}--\ref{thm-3} (see Section \ref{sec-proofs}).

If one of the odd coefficients $b_{2j+1}$ in $F$ (resp. the even coefficients $a_{2j}$ in $G$) is nonzero, then we denote by $j_b$ (resp. $j_a$) the maximal $j$ with this property. If all the coefficients $b_{2j+1}$ (resp. $a_{2j}$) are zero, then $j_b$ (resp. $j_a$) is not defined and we write $b_o=0$ (resp. $a_e=0$). In the following theorems we assume that the Li\'{e}nard system \eqref{model-Lienard2} satisfies \eqref{assum1} and \eqref{assum2} and that the symmetry $a_e=b_o=0$ is broken (i.e., at least one of $j_a$ 
 and $j_b$ is well-defined). By ``for each $y_0$ sufficiently large" we mean for each $y_0>M$ where $M$ is a positive real number (large enough). 

Now we state the main result when $m<2n+1$.
\begin{theorem}[$m<2n+1$]\label{thm-1} Assume that $m<2n+1$ in \eqref{model-Lienard2}. The following statements are true.
\begin{enumerate}
    \item Suppose that $n-2j_b<m-2j_a$ or $a_e=0$. 
      \begin{itemize} 
      \item[(a)] If $m-n-2j_b-1< 0$ and $b_{2j_b+1}(m-n+2j_b+1)>0$ (resp. $b_{2j_b+1}(m-n+2j_b+1)<0$), then $I(y)\to -\infty$ (resp. $+\infty$), as $y\to+\infty$, and, for each $y_0$ sufficiently large, the orbit $U=\{y_l \ | \ l\in\mathbb N\}$ of $y_0$ by $S$ (resp. $S^{-1}$) tends (monotonically) to $+\infty$, $U$ is Minkowski nondegenerate and \begin{equation}\label{formm-1}\dim_B U=\frac{n-2j_b}{n+1-2j_b}.\end{equation} 
      \item[(b)] If $m-n-2j_b-1> 0$, then $I(y)$ converges to a real number $I_*$, as $y\to+\infty$. When $I_*<0$ (resp. $I_*>0$), for each $y_0$ sufficiently large the orbit $U$ of $y_0$ by $S$ (resp. $S^{-1}$) tends to $+\infty$, $U$ is Minkowski nondegenerate and  \begin{equation}\label{form-2}\dim_B U=\frac{2n+1-m}{2n+2-m}. \end{equation} Moreover, when $I_*=0$, for each $y_0$ sufficiently large the orbit $U$ of $y_0$ by $S$ (resp. $S^{-1}$) tends to $+\infty$ for $b_{2j_b+1}<0$ (resp. $b_{2j_b+1}>0$), $U$ is Minkowski nondegenerate and $\dim_B U$ is given in \eqref{formm-1}.
      \end{itemize}
    \item Suppose that $n-2j_b>m-2j_a$ or $b_o=0$. 
      \begin{itemize} 
      \item[(a)] If $2m-2n-2j_a-1<0$ and $a_{2j_a}<0$ (resp. $a_{2j_a}>0$), then $I(y)\to -\infty$ (resp. $+\infty$), as $y\to+\infty$, and, for each $y_0$ sufficiently large, the orbit $U$ of $y_0$ by $S$ (resp. $S^{-1}$) tends to $+\infty$, $U$ is Minkowski nondegenerate and \begin{equation}\label{form-3}\dim_B U=\frac{m-2j_a}{m+1-2j_a}.\end{equation} 
      \item[(b)] If $2m-2n-2j_a-1>0$, then $I(y)$ converges to a real number $I_*$, as $y\to+\infty$. When $I_*<0$ (resp. $I_*>0$), for each $y_0$ sufficiently large, the orbit $U$ of $y_0$ by $S$ (resp. $S^{-1}$) tends to $+\infty$, $U$ is Minkowski nondegenerate and $\dim_B U$ is given in \eqref{form-2}. If $I_*=0$, then for each $y_0$ sufficiently large the orbit $U$ of $y_0$ by $S$ (resp. $S^{-1}$) tends to $+\infty$ for $a_{2j_a}>0$ (resp. $a_{2j_a}<0$), $U$ is Minkowski nondegenerate and $\dim_B U$ is given in \eqref{form-3}.
      \end{itemize}
    \item Suppose that $n-2j_b=m-2j_a$ and $C:=b_{2j_b+1}(m-n+2j_b+1)-a_{2j_a}(n+1)\ne 0$. If $m-n-2j_b-1< 0$, then the same results as in Theorem \ref{thm-1}.1(a) are true if we replace $b_{2j_b+1}(m-n+2j_b+1)>0$ (resp. $b_{2j_b+1}(m-n+2j_b+1)<0$) from Theorem \ref{thm-1}.1(a) with $C>0$ (resp. $C<0$) and, if $m-n-2j_b-1> 0$, the same results as in Theorem \ref{thm-1}.1(b) are true if we replace $b_{2j_b+1}>0$ (resp. $b_{2j_b+1}<0$) from Theorem \ref{thm-1}.1(b) with $C>0$ (resp. $C<0$).
\end{enumerate}
\end{theorem}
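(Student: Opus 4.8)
The plan is to deduce the final statement from the proof of Theorem \ref{thm-1}.1, observing that case 3 differs from case 1 only in the identity of the coefficient that governs the leading asymptotics of the slow divergence integral $I(y)$ as $y\to+\infty$. In the argument for cases 1(a)--1(b), the Minkowski dimension, the nondegeneracy, and the choice between $S$ and $S^{-1}$ are all dictated by two pieces of data: the sign of $m-n-2j_b-1$, which decides whether $I(y)\to\pm\infty$ or $I(y)\to I_*\in\mathbb R$, and the sign of the leading coefficient of $I(y)$, which in case 1 is the sign of $b_{2j_b+1}(m-n+2j_b+1)$. Hence it suffices to prove that, when $n-2j_b=m-2j_a$, the leading coefficient of $I(y)$ is a fixed multiple of $C=b_{2j_b+1}(m-n+2j_b+1)-a_{2j_a}(n+1)$ with a definite sign; every conclusion then follows from the case-1 argument after substituting $b_{2j_b+1}(m-n+2j_b+1)$ by $C$ (and, in the degenerate subcase $I_*=0$, $b_{2j_b+1}$ by $C$).

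First I would compute the asymptotics of $I(y)$ through its derivative. Setting $H:=F'/G$ and using $F(\omega(y))=F(\alpha(y))=y$ with $\omega>0,\alpha<0$, differentiation of $I_\pm$ gives
\[
I'(y)=\frac{F'(\omega(y))}{G(\omega(y))}-\frac{F'(\alpha(y))}{G(\alpha(y))}=H(\omega(y))-H(\alpha(y)).
\]
Because \eqref{assum1}--\eqref{assum2} force $m,n$ odd, the leading part of $H$ is even in $x$ and its values at $\omega$ and at $\alpha\approx-\omega$ cancel at leading order, so the dominant term of $I'(y)$ is asymmetric. I would make this precise by expanding $\omega(y)$ and $\alpha(y)$ in powers of $y^{-1/(n+1)}$ from $F(\omega)=F(\alpha)=y$, and $H(x)$ in powers of $x^{-1}$ times $x^{n-m}$. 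Tracking the highest odd coefficient $b_{2j_b+1}$ of $F$ and the highest even coefficient $a_{2j_a}$ of $G$, the asymmetric part of $H$ produces two numerator monomials of degrees $m+2j_b$ and $n+2j_a$; these coincide exactly when $n-2j_b=m-2j_a$, the hypothesis of case 3. Collecting the resulting common-order terms, I expect
\[
I'(y)=-\kappa_0\,C\,y^{(2j_b-m)/(n+1)}(1+o(1)),\qquad \kappa_0>0,
\]
the sign of $\kappa_0$ being pinned down by consistency with case 1. Integrating, $\int^{\infty}I'$ converges iff the exponent is below $-1$, i.e.\ iff $m-n-2j_b-1>0$; thus $I(y)\to\pm\infty$ when $m-n-2j_b-1<0$ and $I(y)\to I_*\in\mathbb R$ when $m-n-2j_b-1>0$, and in each escaping regime the sign of the relevant leading term of $I$ equals $-\sgn(C)$ resp.\ $\sgn(C)$ as required.

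With this asymptotic the three regimes follow as in the proof of Theorem \ref{thm-1}.1. If $m-n-2j_b-1<0$, then $I(y)\to-\infty$ for $C>0$ and $I(y)\to+\infty$ for $C<0$; the telescoping identity $I(y_l)=I_+(y_{l+1})-I_+(y_l)$ together with $I_+'(y)=-\tfrac{n+1}{A}y^{(n-m)/(n+1)}(1+o(1))$ yields a difference equation $y_{l+1}-y_l=c\,y_l^{(2j_b+1)/(n+1)}(1+o(1))$, $c>0$, so that $y_l^{-1/(n+1)}\simeq l^{-1/(n-2j_b)}$ is Minkowski nondegenerate and \eqref{def-dim-1} gives \eqref{formm-1}. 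If $m-n-2j_b-1>0$, then $I(y)\to I_*$, and for $I_*\ne0$ the telescoping gives $I_+(y_l)=lI_*(1+o(1))$, which with $I_+(y)=-\tfrac{(n+1)^2}{A(2n+1-m)}y^{(2n+1-m)/(n+1)}(1+o(1))$ forces $y_l^{-1/(n+1)}\simeq l^{-1/(2n+1-m)}$ and the dimension \eqref{form-2}; when $I_*=0$ the leading decaying term of $I(y)$ drives the escape, its sign ($C<0$ resp.\ $C>0$) fixing the use of $S$ (resp.\ $S^{-1}$) and the same increment analysis reproducing \eqref{formm-1}. The passage from the growth rate of $y_l$ to $\dim_B U$ is exactly the fractal analysis of Section \ref{sectionFAI}.

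The main obstacle is the coefficient computation in the tie case $n-2j_b=m-2j_a$: verifying that the leading coefficient of $I(y)$ is proportional to $C$, with the asymmetric factor $m-n+2j_b+1$ attached to $b_{2j_b+1}$ rather than the naive factor $(2j_b+1)A$ coming from the odd part of $H$ alone. The subtlety is that $b_{2j_b+1}$ enters at this order through two channels---directly, via the even part of $F'$ inside $H$, and indirectly, via the turning-point asymmetry $\omega(y)+\alpha(y)$, which is itself generated by the odd part of $F$---whereas $a_{2j_a}$ enters only directly, since $\omega,\alpha$ depend on $F$ alone. The work lies in combining the two $b$-contributions into the single factor $m-n+2j_b+1$ and in checking that no coefficients $b_{2j+1},a_{2j}$ with smaller $j$ feed into the same order, so that the total leading coefficient is precisely the stated $C$; once this is settled, the sign dichotomies and both dimension formulas are inherited directly from Theorem \ref{thm-1}.1.
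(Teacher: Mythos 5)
Your route is, up to a change of variables, the paper's own: you reduce case 3 to the case-1 machinery by showing that when $n-2j_b=m-2j_a$ the $b$- and $a$-contributions to the slow divergence integral enter at the same order and combine into $C$; the paper does exactly this in the compactified chart $r=y^{-1/(n+1)}$ (your identity $I'(y)=F'(\omega(y))/G(\omega(y))-F'(\alpha(y))/G(\alpha(y))$ corresponds to its $J_--J_+$ via the invariance used in \eqref{konacno111}, and your two ``channels'' for $b_{2j_b+1}$ are precisely Lemma \ref{label-LemmaPhi} combined with \eqref{alig-J}). Your exponents, sign dichotomies and dimension formulas all agree with \eqref{eqcasezadnjiiii}, \eqref{finally11} and \eqref{finally111}. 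The genuine gap is that the pivotal computation is announced but never performed: you assert $I'(y)=-\kappa_0\,C\,y^{(2j_b-m)/(n+1)}(1+o(1))$ with $\kappa_0>0$, and then concede in your last paragraph that verifying the factor $m-n+2j_b+1$ attached to $b_{2j_b+1}$, and the absence of interference from lower-index coefficients, is ``the main obstacle'' where ``the work lies''. That expansion \emph{is} the content of the theorem in this case; without it, the dichotomy $C>0$ versus $C<0$ --- hence the choice between $S$ and $S^{-1}$ and the direction in which $I(y)$ escapes --- is unproved. Moreover, pinning the sign of $\kappa_0$ ``by consistency with case 1'' is circular as stated: case 1 is a different parameter regime ($n-2j_b<m-2j_a$ or $a_e=0$), and transferring its coefficient to the tie case presupposes that the expansion is bilinear in $(b_{2j_b+1},a_{2j_a})$ with regime-independent coefficients, which is exactly what the omitted computation establishes. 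The paper avoids this trap by deriving \eqref{alig-JJJ} once, uniformly in all coefficients, and only afterwards reading off the dominant term in each regime; your plan should do the same.

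A second, smaller flaw: in the regime $m-n-2j_b-1>0$, $I_*\ne 0$, you obtain the dimension \eqref{form-2} from the magnitude estimate $y_l^{-1/(n+1)}\simeq l^{-1/(2n+1-m)}$ produced by telescoping. For a monotone sequence, magnitude bounds alone do not control the lower box dimension or the Minkowski nondegeneracy (the distinct values can cluster into sparse groups, collapsing the lower bound); what is needed is increment control, $r_l-r_{l+1}\simeq r_l^{2n+2-m}$ with $r_l=y_l^{-1/(n+1)}$, which is what the paper derives (\eqref{finally111}, via \eqref{asimptotica}) and what \cite[Theorem 1]{EZZ} takes as hypothesis. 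The fix is one line --- apply the mean value theorem to $I_+(y_{l+1})-I_+(y_l)=I(y_l)\to I_*\ne0$ together with $I_+'(y)\simeq -y^{(n-m)/(n+1)}$, exactly as you already do in the regime $m-n-2j_b-1<0$ --- but as written the inference is invalid, and the same correction is needed wherever you pass from growth rates of $y_l$ to $\dim_B U$.
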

We prove Theorem \ref{thm-1} in Section \ref{sectionproof1}.

\begin{remark}
    (a) When $m=1$ in Theorem \ref{thm-1}, then we deal with a classical Li\'{e}nard equation in \eqref{model-Lienard2}. Since $a_0=0$ and $b_1=0$, then we have $a_e=0$ and $n\ge 3$, and Theorem \ref{thm-1}.1(a) implies that the Minkowski dimension of $U$ can take only the following finite set of values: $\frac{n-2}{n-1},\frac{n-4}{n-3},\dots,\frac{3}{4},\frac{1}{2}$ (as $j_b$ increases from $1$ to $\frac{n-1}{2}$). We expect less limit cycles to be produced by $\Gamma_\infty$ as $j_b$ increases (i.e. the Minkowski dimension decreases) and, since there are $\frac{n-1}{2}$ different values for the Minkowski dimension, we conjecture that $\Gamma_\infty$ can produce at most $\frac{n-1}{2}$ limit cycles. Observe that the slow-fast Hopf point at $(x,y)=(0,0)$ in this classical Li\'{e}nard setting can have $\frac{n-1}{2}$ different values for the Minkowski dimension ($\frac{1}{3},\frac{3}{5},\dots,\frac{n-2}{n}$) and can produce at most $\frac{n-1}{2}$ limit cycles (see \cite{BoxNovo}).
    
    (b) Let's focus on Theorem \ref{thm-1} for fixed odd $m$ and $n$ with $m<2n+1$. It can be easily seen that in each statement of Theorem \ref{thm-1} $\dim_B U<\frac{2n+1-m}{2n+2-m}$ (resp. $\dim_B U=\frac{2n+1-m}{2n+2-m}$ and $\dim_B U>\frac{2n+1-m}{2n+2-m}$) indicates that the slow divergence integral $I(y)$ tends to $\pm\infty$ (resp. $I_*\ne 0$ and $I_*=0$) as $y\to +\infty$.
\end{remark}

The next theorem deals with the case where $m=2n+1$.
\begin{theorem}[$m=2n+1$]\label{thm-2}
Assume that $m=2n+1$ in \eqref{model-Lienard2}. Then $I(y)$ converges to a real number $I_*$, as $y\to+\infty$. Moreover, we have
\begin{enumerate}
    \item When $I_*<0$ (resp. $I_*>0$), for each $y_0$ sufficiently large, the orbit $U$ of $y_0$ by $S$ (resp. $S^{-1}$) tends to $+\infty$ and $\dim_B U=0$.
    \item Suppose that ($n-2j_b<2n+1-2j_a$ or $a_e=0$) and $I_*=0$. Then, for each $y_0$ sufficiently large, the orbit $U$ of $y_0$ by $S$ (resp. $S^{-1}$) tends to $+\infty$ for $b_{2j_b+1}<0$ (resp. $b_{2j_b+1}>0$), $U$ is Minkowski nondegenerate and $\dim_B U$ is given in \eqref{formm-1}.
    \item Suppose that ($n-2j_b>2n+1-2j_a$ or $b_o=0$) and $I_*=0$. Then, for each $y_0$ sufficiently large the orbit $U$ of $y_0$ by $S$ (resp. $S^{-1}$) tends to $+\infty$ for $a_{2j_a}>0$ (resp. $a_{2j_a}<0$), $U$ is Minkowski nondegenerate and $\dim_B U=\frac{2n+1-2j_a}{2n+2-2j_a}$.
    \item Suppose that $n-2j_b=2n+1-2j_a$, $C:=b_{2j_b+1}(n+2j_b+2)-a_{2j_a}\frac{n+1}{A}\ne 0$ and $I_*=0$. Then, for each $y_0$ sufficiently large, the orbit $U$ of $y_0$ by $S$ (resp. $S^{-1}$) tends to $+\infty$ for $C<0$ (resp. $C>0$), $U$ is Minkowski nondegenerate and $\dim_B U$ is given in \eqref{formm-1}.
   \end{enumerate}
\end{theorem}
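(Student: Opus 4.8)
The plan is to run exactly the scheme that proves Theorem \ref{thm-1}: reduce the computation of $\dim_B U$ to an asymptotic analysis of the slow divergence integral $I(y)=I_-(y)-I_+(y)$ as $y\to+\infty$, translate the resulting recursion for the orbit into a recursion for the rescaled sequence of \eqref{def-dim-1} (we are in the regime $m\le 2n+1$, so the exponent $\tfrac{1}{n+1}$ is the correct one), and then invoke the fractal analysis near infinity of Section \ref{sectionFAI}. The genuinely new feature for $m=2n+1$ is that, by \eqref{asymptot} with $2n-m=-1$, the integrand $F'(x)^2/G(x)$ decays like $-\tfrac{(n+1)^2}{A}x^{-1}$, so each of $I_\pm(y)$ diverges logarithmically while their difference stays bounded. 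Making this precise is the first task.

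First I would establish the convergence $I(y)\to I_*$. Substituting $x\mapsto -x$ in $I_+$ to place both integrals on the positive half-line, I would split $I(y)$ into a principal part, namely the integral up to the turning coordinate $\omega(y)$ of the \emph{even} part of the rational function $F'(x)^2/G(x)$, plus a boundary term measuring the mismatch between $\omega(y)$ and $|\alpha(y)|$. Since the Laurent expansion at infinity of $F'^2/G$ carries the $1/x$ term only in its odd part, the even part is $O(1/x^2)$, so the principal part converges to a finite constant; and because $\omega(y),|\alpha(y)|\sim y^{1/(n+1)}$, the boundary term tends to $0$. This proves $I(y)\to I_*\in\mathbb R$ and realizes $I_*$ as a convergent integral of the even part.

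Next I would extract the leading term of $I(y)-I_*$. Both the tail of the even-part integral and the boundary term decay like powers of $y$: a direct computation shows that the tail is controlled by the smallest even power, $x^{-(n+1-2j_b)}$ coming from $b_{2j_b+1}$ or $x^{-(2n+2-2j_a)}$ coming from $a_{2j_a}$, while the boundary term is controlled by $b_{2j_b+1}$ through $\ln\omega(y)-\ln|\alpha(y)|$. Comparing the exponents $n-2j_b$ and $2n+1-2j_a$ reproduces exactly the case split of the theorem: in case 2 the $b$-contribution dominates and $I(y)-I_*\approx c\,b_{2j_b+1}\,y^{-(n-2j_b)/(n+1)}$ for a nonzero constant $c$; in case 3 the $a$-contribution dominates with exponent $(2n+1-2j_a)/(n+1)$; in case 4 the two contribute at the same order with combined coefficient $C$; and when $I_*\ne 0$ (case 1) the relevant quantity is simply $I(y)\to I_*\ne 0$.

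Finally, writing $y_{l+1}=y_l+\Delta_l$ in the defining relation $I_+(y_{l+1})=I_-(y_l)$ (cf. \eqref{slow-relation-def}) and linearizing gives $\Delta_l\approx I(y_l)/I_+'(y_l)$ with $I_+'(y)\approx-\tfrac{n+1}{A}y^{-1}$; the sign of the leading coefficient of $I$ then decides whether $S$ or $S^{-1}$ drives the orbit monotonically to $+\infty$, which is precisely where the sign conditions on $b_{2j_b+1}$, $a_{2j_a}$, $C$ and $I_*$ enter. Passing to $t_l=y_l^{-1/(n+1)}$ turns the recursion into $t_l-t_{l+1}\simeq t_l^{\gamma}$ with $\gamma=n+1-2j_b$ (cases 2 and 4) or $\gamma=2n+2-2j_a$ (case 3), so the sequence result of Section \ref{sectionFAI} yields Minkowski nondegeneracy and $\dim_B U=\tfrac{\gamma-1}{\gamma}$, i.e. \eqref{formm-1} or $\tfrac{2n+1-2j_a}{2n+2-2j_a}$; when $I_*\ne 0$ the recursion is asymptotically geometric and $\dim_B U=0$. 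I expect the main obstacle to be the asymptotic analysis of $I(y)$: isolating the logarithmic divergences, showing they cancel in the difference, and pinning down the leading coefficient together with its correct sign, assembled from both the tail and the boundary contribution, sharply enough to secure the comparability needed for Minkowski nondegeneracy rather than a mere one-sided dimension estimate.
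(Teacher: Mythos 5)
Your proposal is correct in substance and follows the same two-step strategy as the paper---asymptotics of the slow divergence integral at infinity, then the sequence theorem of \cite{EZZ} applied to the rescaled sequence $t_l=y_l^{-1/(n+1)}$---but it organizes the key computation differently. The paper never analyzes $I_\pm(y)$ directly: it transfers the recursion to the Poincar\'e--Lyapunov chart via invariance of the slow divergence integral, obtaining $J_-(r_l)-J_+(r_{l+1})=\tilde J$ with $r_l=y_l^{-1/(n+1)}$ as in \eqref{konacno33333}, and then computes $J_--J_+$ by comparing the two branches $\Phi_\pm$ of the compactified critical curve (Lemma \ref{label-LemmaPhi} and the expansion \eqref{alig-JJJ=====}). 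Your even/odd decomposition of $F'^2/G$ together with the boundary term over $[\omega(y),|\alpha(y)|]$ is the exact uncompactified counterpart of that comparison: the absence of an $x^{-1}$ term in the even part corresponds to the cancellation of the symmetric parts of $\Phi_-$ and $-\Phi_+$, and your integrand exponents $n+1-2j_b$ and $2n+2-2j_a$ reproduce the powers $r^{n-2j_b}$, $r^{2n+1-2j_a}$ of \eqref{alig-JJJ=====} after $r=y^{-1/(n+1)}$. What the paper's route buys is uniformity (the same chart computation serves $m<2n+1$, $m=2n+1$ and $m>2n+1$) and the geometric reading of $J_\pm$ as slow divergence integrals of the compactified system; what yours buys is elementarity, since for this theorem the compactification then enters only through the definition \eqref{def-dim-1}.

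Two points need tightening before this is a proof. First, in case 1 ($I_*\neq 0$) the step $\Delta_l=y_{l+1}-y_l$ is comparable to $y_l$, so the linearization $\Delta_l\approx I(y_l)/I_+'(y_l)$ is not justified as stated; instead use $I_+(y)=-\frac{n+1}{A}\ln y+c_++o(1)$, which converts the recursion exactly into $\ln(y_{l+1}/y_l)\to-\frac{A}{n+1}I_*$, i.e.\ the geometric behavior equivalent to the paper's \eqref{limit-m=}. In cases 2--4 the linearization is legitimate only after you prove $y_{l+1}/y_l\to 1$ (which follows from $I(y_l)\to 0$ and the logarithmic form of $I_+$); this is needed to get the two-sided comparability $t_l-t_{l+1}\simeq t_l^{\gamma}$ required for Minkowski nondegeneracy, not merely an upper bound. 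Second, case 4 is sensitive to the exact leading coefficient: the theorem's constant is $C=b_{2j_b+1}(n+2j_b+2)-a_{2j_a}\frac{n+1}{A}$, with a factor $\frac1A$ on the $a$-term that is absent in Theorems \ref{thm-1} and \ref{thm-3}. In your scheme the $b$-contribution (tail plus boundary term) carries $1/A$ while the $a$-contribution carries $1/A^2$, so the correct combination does emerge structurally, but the constants must actually be assembled---this is precisely the computation \eqref{alig-JJJ=====}---since a wrong combination would corrupt both the sign conditions and the nondegeneracy claim when the two contributions nearly cancel. You flag this yourself as the main obstacle, and rightly so; with those two repairs your argument goes through.
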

We prove Theorem \ref{thm-2} in Section \ref{sectionproof2}. Note that in Theorem \ref{thm-2}.1 the Minkowski dimension of $U$ is zero. This means that the sequence in \eqref{def-dim-1} converges exponentially to
zero (see Section \ref{section-analysism=}). In the rest of Theorem \ref{thm-2} and in Theorem \ref{thm-1} and Theorem \ref{thm-3} the Minkowski dimension is always positive.

In the next theorem we assume that $m>2n+1$.
\begin{theorem}[$m>2n+1$]\label{thm-3}
Assume that $m>2n+1$ in \eqref{model-Lienard2} and that \eqref{balanced-inf} is true (i.e. $I(y)$ converges to $0$ as $y\to +\infty$). The following statements are true.
\begin{enumerate}
    \item Suppose that $n-2j_b<m-2j_a$ or $a_e=0$. Then, for each $y_0$ sufficiently large, the orbit $U$ of $y_0$ by $S$ (resp. $S^{-1}$) tends to $+\infty$ for $b_{2j_b+1}<0$ (resp. $b_{2j_b+1}>0$), $U$ is Minkowski nondegenerate and
    \begin{equation}\label{Th231}
\dim_B U=\frac{(n-2j_b)(m+1)}{(n-2j_b)(m+1)+2(n+1)}.
\end{equation}
    \item Suppose that $n-2j_b>m-2j_a$ or $b_o=0$. Then, for each $y_0$ sufficiently large, the orbit $U$ of $y_0$ by $S$ (resp. $S^{-1}$) tends to $+\infty$ for $a_{2j_a}>0$ (resp. $a_{2j_a}<0$), $U$ is Minkowski nondegenerate and
    \begin{equation}\label{Th232}
\dim_BU=\frac{(m-2j_a)(m+1)}{(m-2j_a)(m+1)+2(n+1)}.
\end{equation}
    \item Suppose that $n-2j_b=m-2j_a$ and $C:=b_{2j_b+1}(m-n+2j_b+1)-a_{2j_a}(n+1)\ne 0$. Then, for each $y_0$ sufficiently large, the orbit $U$ of $y_0$ by $S$ (resp. $S^{-1}$) tends to $+\infty$ for $C<0$ (resp. $C>0$), $U$ is Minkowski nondegenerate and $\dim_B U$ is given in \eqref{Th231} or \eqref{Th232}.
\end{enumerate}
\end{theorem}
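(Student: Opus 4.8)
The plan is to reduce the analysis to the behaviour of the slow relation function $S$ near $y=+\infty$ and then to invoke the dictionary between power–law sequences and the Minkowski dimension. Under the balancing hypothesis \eqref{balanced-inf} we have $I_-(+\infty)=I_+(+\infty)$, so, writing $J_\pm(y):=I_\pm(y)-I_\pm(+\infty)$ — which are positive, strictly decreasing, and tend to $0$ as $y\to+\infty$ since $I_\pm'<0$ — the relation \eqref{slow-relation-def} for consecutive orbit points, $I_-(y_l)=I_+(y_{l+1})$, becomes
\[
J_+(y_{l+1})=J_-(y_l),\qquad\text{i.e.}\qquad S=J_+^{-1}\circ J_-.
\]
Everything therefore reduces to a sufficiently precise description of $J_\pm$ at infinity.

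\emph{Two–term expansion of the slow divergence integrals (the crux).} With turning points $\omega(y)>0>\alpha(y)$, $F(\omega)=F(\alpha)=y$, one has $J_-(y)=-\int_{\omega(y)}^{+\infty}\frac{F'^2}{G}\,dx$ and $J_+(y)=\int_{-\infty}^{\alpha(y)}\frac{F'^2}{G}\,dx$. Expanding the integrand at infinity via \eqref{asymptot} produces the leading monomial $-\frac{(n+1)^2}{A}x^{2n-m}$ (odd exponent), an antisymmetric correction of even exponent $x^{n+2j_b-m}$ coming from the odd coefficient $b_{2j_b+1}$ of $F$, and one of even exponent $x^{2n+2j_a-2m}$ coming from the even coefficient $a_{2j_a}$ of $G$. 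Integrating and using $\omega(y)\sim y^{1/(n+1)}\sim|\alpha(y)|$ gives
\[
J_\pm(y)=\frac{(n+1)^2}{A(m-2n-1)}\,y^{-\beta}\big(1+o(1)\big),\qquad \beta:=\frac{m-2n-1}{n+1}>0,
\]
with the \emph{same} leading coefficient. Because of the structural sign difference between the two integrals, monomials of odd exponent contribute equally to $J_-$ and $J_+$ (hence cancel in $J_--J_+$), whereas those of even exponent contribute with opposite signs and survive; the asymmetry $\omega(y)\neq|\alpha(y)|$ forced by the odd part of $F$ adds, at the same order, to the $b_{2j_b+1}$ term. Tracking exponents, the surviving correction in $J_--J_+$ decays like $y^{-\beta_b}$ with $\beta_b=(m-n-2j_b-1)/(n+1)$ (coefficient $\propto b_{2j_b+1}$) or like $y^{-\beta_a}$ with $\beta_a=(2m-2n-2j_a-1)/(n+1)$ (coefficient $\propto a_{2j_a}$), and $\beta_a<\beta_b$ precisely when $m-2j_a<n-2j_b$. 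The slower–decaying term dominates, which is exactly the trichotomy of the statement; in the tie $n-2j_b=m-2j_a$ the two contributions combine into the single coefficient $C$, and $C\neq0$ guarantees that the antisymmetric part does not vanish. I expect this expansion — correctly combining the subleading integrand terms with the turning–point asymmetry and pinning down $C$ and its sign — to be the main obstacle.

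\emph{The increment law and the direction of the orbit.} Inserting the expansions into $J_+(y_{l+1})=J_-(y_l)$, the leading terms cancel to highest order; subtracting them and applying the mean value theorem to $J_+$ (with $J_+'(y)\sim-\beta\frac{(n+1)^2}{A(m-2n-1)}y^{-\beta-1}$) converts the antisymmetric discrepancy into
\[
y_{l+1}-y_l\;\simeq\;K\,y_l^{\,1-(\beta'-\beta)},\qquad \beta'=\min\{\beta_a,\beta_b\},
\]
where the sign of $K$ is governed by $b_{2j_b+1}$ (case~1), by $a_{2j_a}$ (case~2), or by $C$ (case~3). Since $S$ is increasing, this sign decides whether iterating $S$ or $S^{-1}$ pushes the orbit monotonically to $+\infty$, yielding the directional dichotomies; note that for $m>2n+1$ the weights $m-n+2j_b+1$ and $n+1$ are positive, so the sign conditions reduce to those stated.

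\emph{Passage to the Minkowski dimension.} Finally I would move to the compactification variable $u_l:=y_l^{-2/(m+1)}\to0$ of \eqref{def-dim-2}. A direct computation turns the increment law into
\[
u_{l+1}=u_l-K''\,u_l^{\,1+\rho}\big(1+o(1)\big),\qquad \rho=\tfrac{m+1}{2}(\beta'-\beta),
\]
a one–dimensional map of the type treated in \cite{EZZ,BoxRenato,BoxVlatko}; its orbits satisfy $u_l\simeq l^{-1/\rho}$, are Minkowski nondegenerate, and have $\dim_B\{u_l\}=\rho/(1+\rho)$. Substituting $\beta'-\beta=(n-2j_b)/(n+1)$ gives \eqref{Th231}, and $\beta'-\beta=(m-2j_a)/(n+1)$ gives \eqref{Th232}; in case~3 the two agree because $n-2j_b=m-2j_a$. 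Independence of $y_0$ is automatic, since $\rho$ and the power–law tail $u_l\simeq l^{-1/\rho}$ do not depend on the starting point, and $\dim_B$ depends only on the tail of the sequence.
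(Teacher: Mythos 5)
Your proposal is correct in outline and reproduces the right exponents: your $\beta'-\beta$ equals $(n-2j_b)/(n+1)$ or $(m-2j_a)/(n+1)$, so $\rho=\tfrac{m+1}{2}(\beta'-\beta)$ and $\dim_B=\rho/(1+\rho)$ give exactly \eqref{Th231} and \eqref{Th232}, and your sign discussion matches the $S$ versus $S^{-1}$ dichotomies of the statement. In substance this is the same computation as the paper's — a two-term expansion of the antisymmetric part of the slow divergence integral at infinity, followed by the dictionary of \cite{EZZ} between the increment law $u_l-u_{l+1}\simeq u_l^{1+\rho}$ and Minkowski nondegeneracy with dimension $\rho/(1+\rho)$ — but your route is organized differently, and more directly. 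The paper passes to the Poincar\'e--Lyapunov charts first: it defines the chart slow divergence integrals $\bar J_\pm$ in \eqref{SDI999-}--\eqref{SDI999+}, reparametrizes them via Lemma~\ref{label-Lemmapsi}, performs the whole expansion \eqref{alig-J>++++++} and the increment estimate \eqref{subsub2} in the compactified variable $\hat r$, and only at the end (Section~\ref{sectionproof3}, computation \eqref{konacnom>>>111}) uses \eqref{balanced-inf} together with the coordinate invariance of the slow divergence integral to transport the result to the sequence $(y_l)$. You instead exploit the fact — special to $m>2n+1$ — that the improper integrals $\int^{\pm\infty}F'(x)^2/G(x)\,dx$ converge, normalize $J_\pm(y)=I_\pm(y)-I_\pm(+\infty)$, and run the entire expansion in the original plane with turning points $\omega(y),\alpha(y)$, compactifying only at the last step $u_l=y_l^{-2/(m+1)}$ to invoke \eqref{def-dim-2}; your identification of the two sources of the surviving correction (even-exponent integrand terms from $a_{2j}$ and $b_{2j+1}$, plus the turning-point asymmetry $\omega\neq|\alpha|$ entering at the same order as the $b_{2j_b+1}$-term) is precisely the content of Lemma~\ref{label-Lemmapsi} and \eqref{alig-J>++++++} in disguise. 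What your route buys is a shorter, chart-free argument; what the paper's route buys is uniformity, since the chart-based integrals make sense also when $m\le 2n+1$ (where your normalization is meaningless because $I_\pm(+\infty)=-\infty$), so one framework covers Theorems \ref{thm-1}--\ref{thm-3}. The one item you leave as a promissory note — the explicit coefficients, hence the rigorous justification of the sign of $K$ and of the tie-breaking constant $C$ — is indeed the bookkeeping core of the paper's proof; your claimed structure and value of $C$ agree with what \eqref{alig-J>++++++} yields, so this is a matter of carrying out the computation you describe rather than a gap in the idea.
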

 Theorem \ref{thm-3} will be proved in Section \ref{sectionproof3}.
 \begin{remark}
     If $n=1$ in Theorem \ref{thm-3} (Li\'{e}nard equations of degree $m$ with linear damping), then because $a_0=0$ and $b_o=0$, we have $\dim_BU=\frac{(m-2j_a)(m+1)}{(m-2j_a)(m+1)+4}$ with $j_a=1,\dots,\frac{m-1}{2}$. 
 \end{remark}
\smallskip

The case where $n-2j_b=m-2j_a$ and $C=0$ in Theorems \ref{thm-1}--\ref{thm-3} is a topic for further study. We strongly believe that similar methods can be used to deal with this case. 

For each statement in Theorems \ref{thm-1}--\ref{thm-3} it is possible to prove the existence of a Li\'{e}nard equation that satisfies assumptions of the statement. This will be presented in the doctoral thesis of Ansfried Janssens because of the length of this paper.

\section{Poincar\'{e}--Lyapunov compactification and fractal analysis near infinity}
\label{sec-statement}

\subsection{Poincar\'{e}--Lyapunov compactification and canard cycles near infinity}\label{PLCdynamics}

\subsubsection{The case $m<2n+1$}\label{m<-section-infty}
In this section we study the dynamics of \eqref{model-Lienard1} near infinity on the Poincar\'{e}--Lyapunov disc
of type $(1, n+1)$. In the positive $x$-direction we use the coordinate change 
\begin{equation}
x=\frac{1}{r}, \ y=\frac{\bar y}{r^{n+1}}\nonumber
\end{equation}
where $r>0$ is kept small and $\bar y$ is kept in a large compact set.  In the coordinates $(r,\bar y)$ system \eqref{model-Lienard1} becomes
\begin{equation}
\label{model-Lienardm<+}
    \begin{vf}
        \dot{r} &=& -r\left(\bar y-1-\sum_{k=0}^{n}b_kr^{n+1-k} \right)   \\
        \dot{\bar y} &=&-\epsilon r^{2n+1-m}\left( A+ \sum_{k=0}^{m-1}a_kr^{m-k}  \right)\\
        &&-(n+1)\bar y \left(\bar y-1-\sum_{k=0}^{n}b_kr^{n+1-k} \right),
    \end{vf}
\end{equation}
upon multiplication by $r^n$. For $\epsilon=0$, on the line $\{r=0\}$ system \eqref{model-Lienardm<+} has two singularities: $\bar y=0$ and $\bar y=1$. The eigenvalues of the linear part at $\bar y=0$ (resp. $\bar y=1$) are given by $(1,n+1)$ (resp. $(0,-(n+1))$). Thus, we have at $\bar y=0$ a hyperbolic and repelling node and at $\bar y=1$ a semi-hyperbolic singularity with the $\bar y$-axis as stable manifold and the curve of singularities $\bar y=1+\sum_{k=0}^{n}b_kr^{n+1-k}$ as center manifold.
\smallskip

Using asymptotic expansions in $\epsilon$ and the invariance under the flow we can see that center manifolds of \eqref{model-Lienardm<+}$+0\frac{\partial}{\partial \epsilon}$ at $(r,\bar y, \epsilon)=(0,1,0)$ are given by 
\[\bar y=1+\sum_{k=0}^{n}b_kr^{n+1-k}-r^{2n+1-m}\left(\frac{A}{n+1}+O(r)\right)\epsilon+O(\epsilon^2).\]
If we substitute this for $\bar y$ in the $r$-component of \eqref{model-Lienardm<+}, then we get the so-called slow dynamics 
\[r'=r^{2n+2-m}\left(\frac{A}{n+1} +O(r)\right),\]
upon desingularization (i.e. division by $\epsilon$ and $\epsilon\to 0$). Hence for $r>0$ and $r\sim 0$ the slow dynamics points away from the singularity $r=0$ if $A=1$ and $m$ odd or if $m$ is even. (Let's recall that $A=1$ if $m$ is even because $m<2n+1$.) The slow dynamics is directed towards $r=0$ if $A=-1$ and $m$ odd.

In the negative $x$-direction we have
\begin{equation}
x=\frac{-1}{r}, \ y=\frac{\bar y}{r^{n+1}}.\nonumber
\end{equation}
  System \eqref{model-Lienard1} changes (after multiplication by $r^n$) into
\begin{equation}
\label{model-Lienardm<-}
    \begin{vf}
        \dot{r} &=& r\left(\bar y-(-1)^{n+1}-\sum_{k=0}^{n}b_k(-1)^k r^{n+1-k} \right)   \\
        \dot{\bar y} &=&-\epsilon r^{2n+1-m}\left( A(-1)^m+ \sum_{k=0}^{m-1}a_k(-1)^k r^{m-k}  \right)\\
        &&+(n+1)\bar y \left(\bar y-(-1)^{n+1}-\sum_{k=0}^{n}b_k(-1)^k r^{n+1-k} \right) .
    \end{vf}
\end{equation} 
 When $\epsilon=r=0$, system \eqref{model-Lienardm<-} has two singularities, $\bar y=0$ and $\bar y=(-1)^{n+1}$. The eigenvalues of the linear part at $\bar y=0$ are given by $((-1)^{n},(-1)^{n}(n+1))$ and at $\bar y=(-1)^{n+1}$ by $(0,(-1)^{n+1}(n+1))$. Hence, if $n$ is odd (resp. even), we find at $\bar y=0$ a hyperbolic and attracting (resp. repelling) node and at $\bar y=(-1)^{n+1}$ a semi-hyperbolic singularity with the $\bar y$-axis as unstable (resp. stable) manifold. The curve of singularities is given by $\bar y=(-1)^{n+1}+\sum_{k=0}^{n}b_k(-1)^k r^{n+1-k}$. 
\smallskip

We obtain the slow dynamics along the curve of singularities near $(r,\bar y)=(0,(-1)^{n+1})$ in a similar fashion as in the positive $x$-direction:
\[r'=r^{2n+2-m}\left(\frac{(-1)^{m-n-1}A}{n+1} +O(r)\right).\]
Let $r>0$ and $r\sim 0$. Suppose that $n$ is odd. Then the slow dynamics points towards the origin $r=0$ if $A=1$ and $m$ odd, and away from $r=0$ if $A=-1$ and $m$ odd, or if $m$ is even. If $n$ is even, then the slow dynamics points to $r=0$ for $A=-1$ and $m$ odd, or for $m$ even, and away from $r=0$ for $A=1$ and $m$ odd.
\smallskip

There are no extra singularities in the positive (resp. negative) $y$-direction. After collecting all the information, we get the phase portraits near infinity of \eqref{model-Lienard1}, with $\epsilon=0$, including direction of the slow dynamics (see Fig. \ref{fig-m<}). 
\begin{figure}[htb]
	\begin{center}
		\includegraphics[width=10.4cm,height=7.7cm]{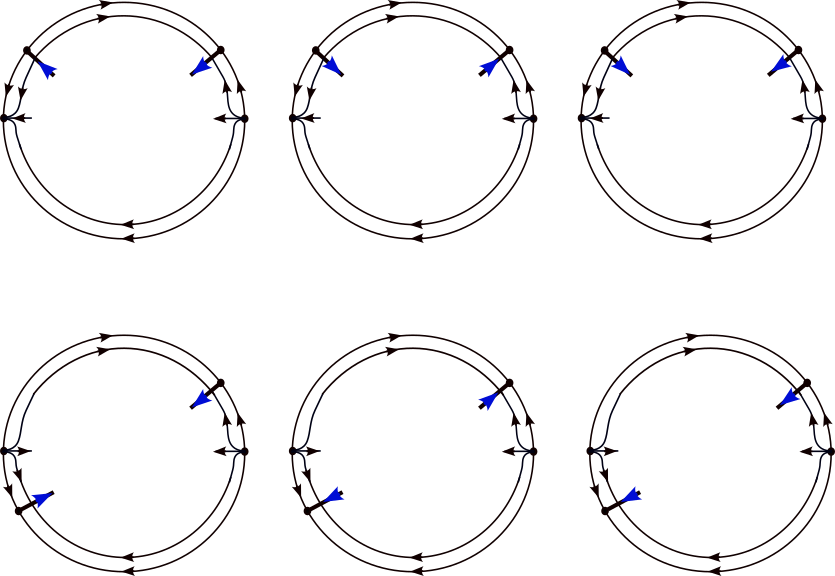}
		{\footnotesize
        \put(-85,113){$A=1$, $m$ even, $n$ odd}
        \put(-190,113){$A=-1$, $m$ odd, $n$ odd}
        \put(-292,113){$A=1$, $m$ odd, $n$ odd}
         \put(-85,-15){$A=1$, $m$ even, $n$ even}
        \put(-190,-15){$A=-1$, $m$ odd, $n$ even}
        \put(-292,-15){$A=1$, $m$ odd, $n$ even}
             }
         \end{center}
 \caption{The phase portraits near infinity of \eqref{model-Lienard1}, with $m<2n+1$ and $\epsilon=0$, and direction of the slow dynamics (blue).}
	\label{fig-m<}
\end{figure}
From Fig. \ref{fig-m<} it follows that, if \eqref{model-Lienard1}, with $m<2n+1$, has a canard cycle with a portion at infinity (Fig. \ref{fig-caninf}), at level $\epsilon=0$, then we have that $A=1$ and $m$ and $n$ are odd.
\smallskip

Section \ref{m<frac} is devoted to the fractal analysis of \eqref{model-Lienard1} near infinity, for $m<2n+1$, $A=1$ and $m$ and $n$ odd. It will be more convenient to present the fractal analysis in the positive $y$-direction where we use 
\begin{equation}
x=\frac{\bar x}{r}, \ y=\frac{1}{r^{n+1}},\nonumber
\end{equation}
bringing \eqref{model-Lienard1}, after multiplication by $r^n$, into 
\begin{equation}
\label{model-Lienardm<y+}
    \begin{vf}
        \dot{r} &=& \frac{\epsilon}{n+1}r^{2n+2-m}\left( A\bar x^m+ \sum_{k=0}^{m-1}a_kr^{m-k}\bar x^k  \right)  \\
        \dot{\bar x} &=& \frac{\epsilon}{n+1} r^{2n+1-m}\bar x \left( A\bar x^m+ \sum_{k=0}^{m-1}a_kr^{m-k}\bar x^k  \right)\\
        &&+1-\left(\bar x^{n+1}+\sum_{k=0}^{n}b_kr^{n+1-k}\bar x^k \right).
    \end{vf}
\end{equation}
For $n$ odd, both branches (the attracting and the repelling) of the curve of singularities are visible in the positive $y$-direction.

\subsubsection{The case $m=2n+1$}
We study the dynamics of \eqref{model-Lienard1} near infinity on the Poincar\'{e}--Lyapunov disc
of degree $(1, n+1)$. In the positive $x$-direction we use the coordinate change 
\begin{equation}
x=\frac{1}{r}, \ y=\frac{\bar y}{r^{n+1}}.\nonumber
\end{equation}
 In these new coordinates system \eqref{model-Lienard1} becomes
\begin{equation}
\label{model-Lienardm=+}
    \begin{vf}
        \dot{r} &=& -r\left(\bar y-1-\sum_{k=0}^{n}b_kr^{n+1-k} \right)   \\
        \dot{\bar y} &=&-\epsilon \left( A+ \sum_{k=0}^{2n}a_kr^{2n+1-k}  \right)
        -(n+1)\bar y \left(\bar y-1-\sum_{k=0}^{n}b_kr^{n+1-k} \right),
    \end{vf}
\end{equation}
after multiplication by a factor $r^n$. For $\epsilon=r=0$, system \eqref{model-Lienardm=+} has two singularities: $\bar y=0$, with the eigenvalues $(1,n+1)$ of the linear part, and $\bar y=1$ with the eigenvalues $(0,-(n+1))$ of the linear part. Hence we have at $\bar y=0$ a hyperbolic and repelling node and at $\bar y=1$ a semi-hyperbolic singularity with the $\bar y$-axis as stable manifold and the curve of singularities $\bar y=1+\sum_{k=0}^{n}b_kr^{n+1-k}$ as center manifold. Like in Section \ref{m<-section-infty}, we can compute the slow dynamics along the curve of singularities near $(r,\bar y)=(0,1)$:
\[r'=r\left(\frac{A}{n+1} +O(r)\right).\]
The slow dynamics is directed towards $r=0$ when $A<0$ and away from $r=0$ when $A>0$.

\smallskip

In the negative $x$-direction we use
\begin{equation}
x=\frac{-1}{r}, \ y=\frac{\bar y}{r^{n+1}}.\nonumber
\end{equation}
  System \eqref{model-Lienard1} changes (after multiplication by $r^n$) into
\begin{equation}
\label{model-Lienardm=-}
    \begin{vf}
        \dot{r} &=& r\left(\bar y-(-1)^{n+1}-\sum_{k=0}^{n}b_k(-1)^k r^{n+1-k} \right)   \\
        \dot{\bar y} &=&-\epsilon \left( -A+ \sum_{k=0}^{2n}a_k(-1)^k r^{2n+1-k}  \right)\\
        &&+(n+1)\bar y \left(\bar y-(-1)^{n+1}-\sum_{k=0}^{n}b_k(-1)^k r^{n+1-k} \right) .
    \end{vf}
\end{equation} 
When $\epsilon=r=0$, system \eqref{model-Lienardm=-} has two singularities, $\bar y=0$ and $\bar y=(-1)^{n+1}$, with the same eigenvalues as in the negative $x$-direction in Section \ref{m<-section-infty}. 
The slow dynamics along the curve of singularities $\bar y=(-1)^{n+1}+\sum_{k=0}^{n}b_k(-1)^k r^{n+1-k}$ near $(r,\bar y)=(0,(-1)^{n+1})$ is given by
\[r'=r\left(\frac{(-1)^{n}A}{n+1} +O(r)\right).\]
It points to $r=0$ when $A<0$ and $n$ even or when $A>0$ and $n$ odd, and away from $r=0$ if $A<0$ and $n$ odd or if $A>0$ and $n$ even.
\smallskip

We find no extra singularities in the positive and negative $y$-direction. The behavior of \eqref{model-Lienard1} near infinity is given in Fig. \ref{fig-m=}.
\begin{figure}[htb]
	\begin{center}
		\includegraphics[width=12.5cm,height=3cm]{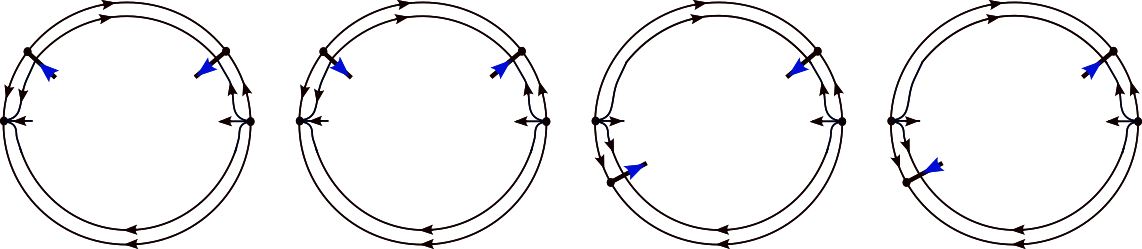}
		{\footnotesize
        \put(-247,-15){$A<0$, $n$ odd}
        \put(-341,-15){$A>0$, $n$ odd}    
        \put(-63,-15){$A<0$, $n$ even}
        \put(-157,-15){$A>0$, $n$ even}
             }
         \end{center}
 \caption{The phase portraits near infinity of \eqref{model-Lienard1}, for $m=2n+1$ and $\epsilon=0$, with indication of the slow dynamics (blue).}
	\label{fig-m=}
\end{figure}
Clearly, canard cycles with a part near infinity (Fig. \ref{fig-caninf}) are possible only if $A>0$ and $n$ odd.

In Section \ref{section-analysism=} we give the fractal analysis of \eqref{model-Lienard1} near infinity, for $A>0$ and $n$ odd, in the positive $y$-direction where we use 
\begin{equation}
x=\frac{\bar x}{r}, \ y=\frac{1}{r^{n+1}}.\nonumber
\end{equation}
System \eqref{model-Lienard1} changes, after multiplication by $r^n$, into 
\begin{equation}
\label{model-Lienardm===y+}
    \begin{vf}
        \dot{r} &=& \frac{\epsilon}{n+1}r\left( A\bar x^{2n+1}+ \sum_{k=0}^{2n}a_kr^{2n+1-k}\bar x^k  \right)  \\
        \dot{\bar x} &=& \frac{\epsilon}{n+1}\bar x \left( A\bar x^{2n+1}+ \sum_{k=0}^{2n}a_kr^{2n+1-k}\bar x^k  \right)\\
        &&+1-\left(\bar x^{n+1}+\sum_{k=0}^{n}b_kr^{n+1-k}\bar x^k \right).
    \end{vf}
\end{equation}

\subsubsection{The case $m>2n+1$}
\label{section-inf-m>}
\paragraph{$m$ is odd.} We study the dynamics of \eqref{model-Lienard1} near infinity on the Poincar\'{e}--Lyapunov disc
of degree $(1, \frac{m+1}{2})$. In the positive $x$-direction we use the transformation 
\begin{equation}
x=\frac{1}{r}, \ y=\frac{\bar y}{r^{\frac{m+1}{2}}}.\nonumber
\end{equation}
 In the coordinates $(r,\bar y)$ system \eqref{model-Lienard1} can be written as
\begin{equation}
\label{model-Lienardm>+}
    \begin{vf}
        \dot{r} &=& -r\left(\bar y-r^{\frac{m+1}{2}-n-1}\left(1+\sum_{k=0}^{n}b_kr^{n+1-k}\right) \right)   \\
        \dot{\bar y} &=&-\epsilon \left( A+ \sum_{k=0}^{m-1}a_kr^{m-k}  \right)\\
        &&-\frac{m+1}{2}\bar y \left(\bar y-r^{\frac{m+1}{2}-n-1}\left(1+\sum_{k=0}^{n}b_kr^{n+1-k}\right) \right),
    \end{vf}
\end{equation}
after multiplication by a factor $r^\frac{m-1}{2}$. When $\epsilon=r=0$, the singularity at $\bar y=0$ of \eqref{model-Lienardm>+} is linearly zero, and to  desingularize \eqref{model-Lienardm>+} we will use the following blow-up at $(r,\bar y,\epsilon)=(0,0,0)$ (see Appendix \ref{appendix}):
\begin{equation}
\label{fam-blow-up}
(r,\bar y,\epsilon)=(v\tilde r,v^{\frac{m+1}{2}-n-1}\tilde y,v^{m-2n-1}\tilde \epsilon), \ v>0, \ v\sim 0, \ \tilde r\ge 0, \ \tilde\epsilon\ge 0, \ (\tilde r,\tilde y,\tilde \epsilon)\in\mathbb S^2.
\end{equation}

When $\epsilon=0$, $\bar y=r^{\frac{m+1}{2}-n-1}\left(1+\sum_{k=0}^{n}b_kr^{n+1-k}\right)$ represents the curve of singularities of \eqref{model-Lienardm>+}, and each singularity with $r>0$ and $r\sim 0$ is normally attracting. Using asymptotic expansions in $\epsilon$, center manifolds of \eqref{model-Lienardm>+}$+0\frac{\partial}{\partial \epsilon}$ along the normally attracting portion can be written as
\begin{align}
    \bar y =& r^{\frac{m+1}{2}-n-1}\left(1+\sum_{k=0}^{n}b_kr^{n+1-k}\right)\nonumber\\
   & -\frac{A+ \sum_{k=0}^{m-1}a_kr^{m-k}}{r^{\frac{m+1}{2}-n-1}\left(n+1+\sum_{k=1}^{n}kb_kr^{n+1-k}\right)}\nonumber
\epsilon+O(\epsilon^2).
\end{align}
We substitute this for $\bar y$ in the $r$-component of \eqref{model-Lienardm>+}, and we get the slow dynamics
\begin{equation}\label{slow-dyn-m>} r'=\frac{A+ \sum_{k=0}^{m-1}a_kr^{m-k}}{r^{\frac{m+1}{2}-n-2}\left(n+1+\sum_{k=1}^{n}kb_kr^{n+1-k}\right)},
\end{equation}
after division by $\epsilon$ and $\epsilon\to 0$.
For $r>0$, it points towards $r=0$ if $A=-1$ and away from $r=0$ if $A=1$. We will use \eqref{slow-dyn-m>} in Section \ref{section-analysism>}.
\smallskip

In the negative $x$-direction we have 
\begin{equation}
x=\frac{-1}{r}, \ y=\frac{\bar y}{r^{\frac{m+1}{2}}}.\nonumber
\end{equation}
 This transformation brings system \eqref{model-Lienard1}, after multiplication by $r^\frac{m-1}{2}$, into
\begin{equation}
\label{model-Lienardm>-}
    \begin{vf}
        \dot{r} &=& r\left(\bar y-r^{\frac{m+1}{2}-n-1}\left((-1)^{n+1}+\sum_{k=0}^{n}b_k(-1)^k r^{n+1-k}\right) \right)   \\
        \dot{\bar y} &=&-\epsilon \left( -A+ \sum_{k=0}^{m-1}a_k(-1)^k r^{m-k}  \right)\\
        &&+\frac{m+1}{2}\bar y \left(\bar y-r^{\frac{m+1}{2}-n-1}\left((-1)^{n+1}+\sum_{k=0}^{n}b_k(-1)^k r^{n+1-k}\right) \right).
    \end{vf}
\end{equation}
 When $\epsilon=r=0$, the singularity at $\bar y=0$ of \eqref{model-Lienardm>-} is linearly zero. In Appendix \ref{appendix} we apply the family blow-up \eqref{fam-blow-up} at $(r,\bar y,\epsilon)=(0,0,0)$.

 When $\epsilon=0$, $\bar y=r^{\frac{m+1}{2}-n-1}\left((-1)^{n+1}+\sum_{k=0}^{n}b_k(-1)^k r^{n+1-k}\right)$ is the curve of singularities of \eqref{model-Lienardm>-}. Each singularity with $r>0$ and $r\sim 0$ is normally repelling (resp. attracting) for $n$ odd (resp. even). The slow dynamics is given by 
 \begin{equation}\label{slow-dyn-m>>} r'=\frac{-A+ \sum_{k=0}^{m-1}a_k(-1)^k r^{m-k}}{r^{\frac{m+1}{2}-n-2}\left((n+1)(-1)^{n+1}+\sum_{k=1}^{n}kb_k(-1)^kr^{n+1-k}\right)}.
\end{equation}
It is directed towards $r=0$ when $A=1$ and $n$ odd or $A=-1$ and $n$ even, and away from $r=0$ when $A=1$ and $n$ even or $A=-1$ and $n$ odd. We use \eqref{slow-dyn-m>>} in Section \ref{section-analysism>}.

We find no extra singularities in the positive and negative $y$-direction. Using the above analysis and Appendix \ref{appendix}, we find the phase portraits of \eqref{model-Lienard1} near infinity (see Fig. \ref{fig-mfamily}). It is clear that canard cycles with a part at infinity (Fig. \ref{fig-caninf}) are possible only if $A=1$ and $n$ odd.
 \begin{figure}[htb]
	\begin{center}
		\includegraphics[width=10.4cm,height=10.2cm]{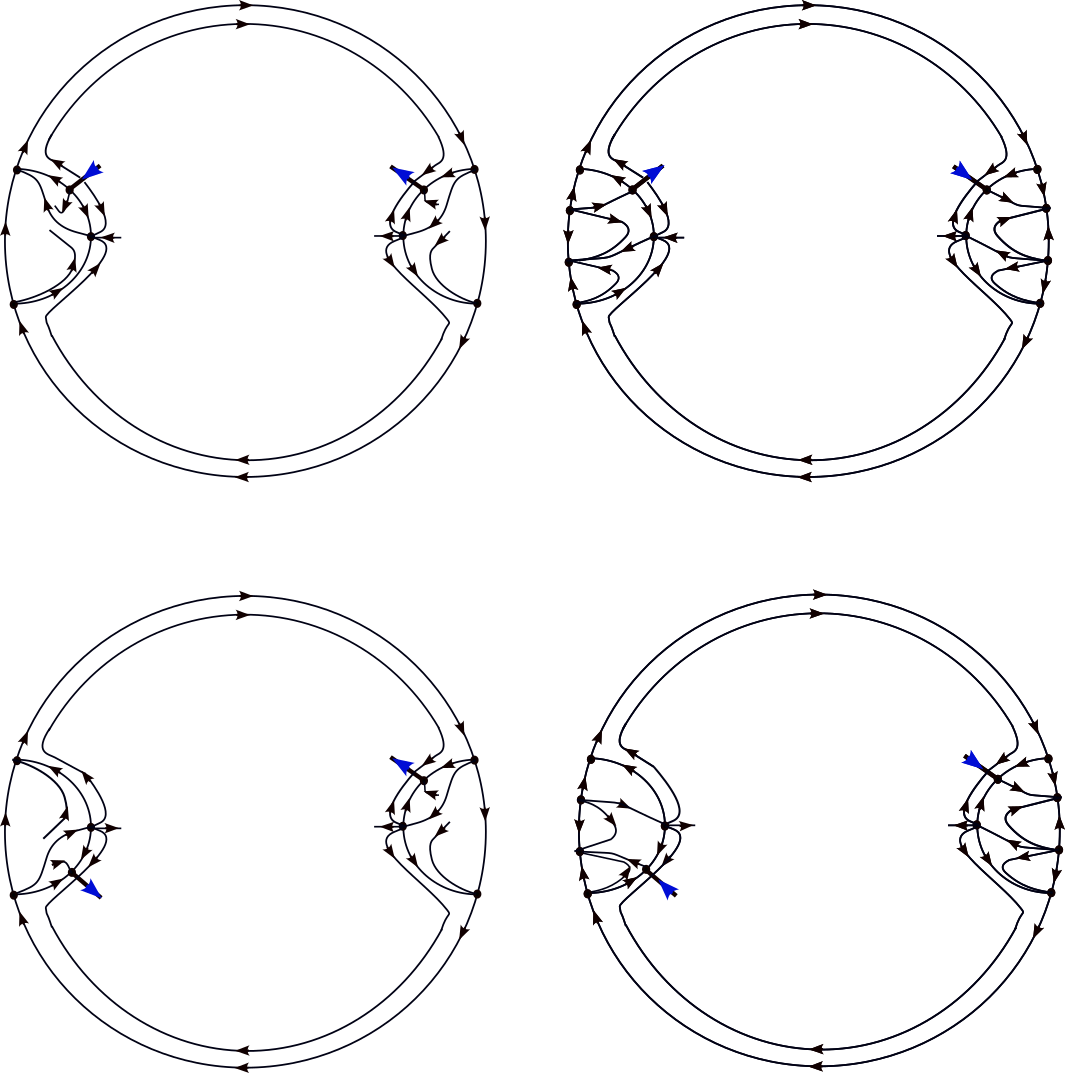}
		{\footnotesize
        \put(-252,145){$A=1$, $n$ odd}
        \put(-95,145){$A=-1$, $n$ odd}
        \put(-252,-14){$A=1$, $n$ even}
        \put(-95,-14){$A=-1$, $n$ even}
             }
         \end{center}
 \caption{The phase portraits near infinity of \eqref{model-Lienard1}, with $m>2n+1$, $m$ odd and $\epsilon=0$, and direction of the slow dynamics (blue).}
	\label{fig-mfamily}
\end{figure}

\paragraph{$m$ is even.} We study the dynamics of \eqref{model-Lienard1} near infinity on the Poincar\'{e}--Lyapunov disc
of degree $(2, m+1)$. In the positive $x$-direction we have 
\begin{equation}
x=\frac{1}{r^2}, \ y=\frac{\bar y}{r^{m+1}}.\nonumber
\end{equation}
 This transformation brings system \eqref{model-Lienard1}, after multiplication by $r^{m-1}$, into
\begin{equation}
\label{model-Lienardmeven>+}
    \begin{vf}
        \dot{r} &=& -\frac{1}{2}r\left(\bar y-r^{m-2n-1}\left(1+\sum_{k=0}^{n}b_k r^{2(n+1-k)}\right) \right)   \\
        \dot{\bar y} &=&-\epsilon \left( A+ \sum_{k=0}^{m-1}a_k r^{2(m-k)}  \right)\\
        &&-\frac{m+1}{2}\bar y \left(\bar y-r^{m-2n-1}\left(1+\sum_{k=0}^{n}b_k r^{2(n+1-k)}\right) \right).
    \end{vf}
\end{equation}
 When $\epsilon=r=0$, the singularity at $\bar y=0$ of \eqref{model-Lienardmeven>+} is linearly zero. In Appendix \ref{appendix} we will apply the following family blow-up at $(r,\bar y,\epsilon)=(0,0,0)$:
 \begin{equation}
\label{fam-blow-up--}
(r,\bar y,\epsilon)=(v\tilde r,v^{m-2n-1}\tilde y,v^{2(m-2n-1)}\tilde \epsilon), \ v>0, \ v\sim 0, \ \tilde r\ge 0, \ \tilde\epsilon\ge 0, \ (\tilde r,\tilde y,\tilde \epsilon)\in\mathbb S^2.
\end{equation}

 When $\epsilon=0$, $\bar y=r^{m-2n-1}\left(1+\sum_{k=0}^{n}b_k r^{2(n+1-k)}\right)$ is the curve of singularities of \eqref{model-Lienardmeven>+}. Each singularity with $r>0$ and $r\sim 0$ is normally  attracting. The slow dynamics is given by 
 \begin{equation} r'=\frac{1}{r^{m-2n-2}}\left(\frac{A}{2(n+1)
 }+O(r)\right).\nonumber
\end{equation}
It is directed away from $r=0$ because $A=1$. 
\smallskip

In the negative $x$-direction we have 
\begin{equation}
x=\frac{-1}{r^2}, \ y=\frac{\bar y}{r^{m+1}}.\nonumber
\end{equation}
 This transformation brings system \eqref{model-Lienard1}, after multiplication by $r^{m-1}$, into
\begin{equation}
\label{model-Lienardmeven>-}
    \begin{vf}
        \dot{r} &=& \frac{1}{2}r\left(\bar y-r^{m-2n-1}\left((-1)^{n+1}+\sum_{k=0}^{n}b_k(-1)^k r^{2(n+1-k)}\right) \right)   \\
        \dot{\bar y} &=&-\epsilon \left( A+ \sum_{k=0}^{m-1}a_k(-1)^k r^{2(m-k)}  \right)\\
        &&+\frac{m+1}{2}\bar y \left(\bar y-r^{m-2n-1}\left((-1)^{n+1}+\sum_{k=0}^{n}b_k(-1)^k r^{2(n+1-k)}\right) \right).
    \end{vf}
\end{equation}
 When $\epsilon=r=0$, the singularity at $\bar y=0$ of \eqref{model-Lienardmeven>-} is linearly zero. In Appendix \ref{appendix} we will apply the family blow-up \eqref{fam-blow-up--} at $(r,\bar y,\epsilon)=(0,0,0)$. 

 When $\epsilon=0$, $\bar y=r^{m-2n-1}\left((-1)^{n+1}+\sum_{k=0}^{n}b_k(-1)^k r^{2(n+1-k)}\right)$ is the curve of singularities of \eqref{model-Lienardmeven>-}. Each singularity with $r>0$ and $r\sim 0$ is normally repelling (resp. attracting) for $n$ odd (resp. even). The slow dynamics is given by 
 \begin{equation} r'=\frac{1}{r^{m-2n-2}}\left(\frac{A}{2(n+1)(-1)^{n+1}}+O(r)\right).\nonumber
\end{equation}
It is directed away from $r=0$ when $A=1$ and $n$ odd, and towards $r=0$ when $A=1$ and $n$ even. 
\smallskip

There are no extra singularities in the positive and negative $y$-direction. Using the above information and Appendix \ref{appendix}, we find the behavior of \eqref{model-Lienard1} near infinity (see Fig. \ref{fig-mfamilyeven}). Clearly, when $m$ is even, canard cycles with parts at infinity (Fig. \ref{fig-caninf}) are not possible.
\begin{figure}[htb]
	\begin{center}
		\includegraphics[width=9.6cm,height=4.6cm]{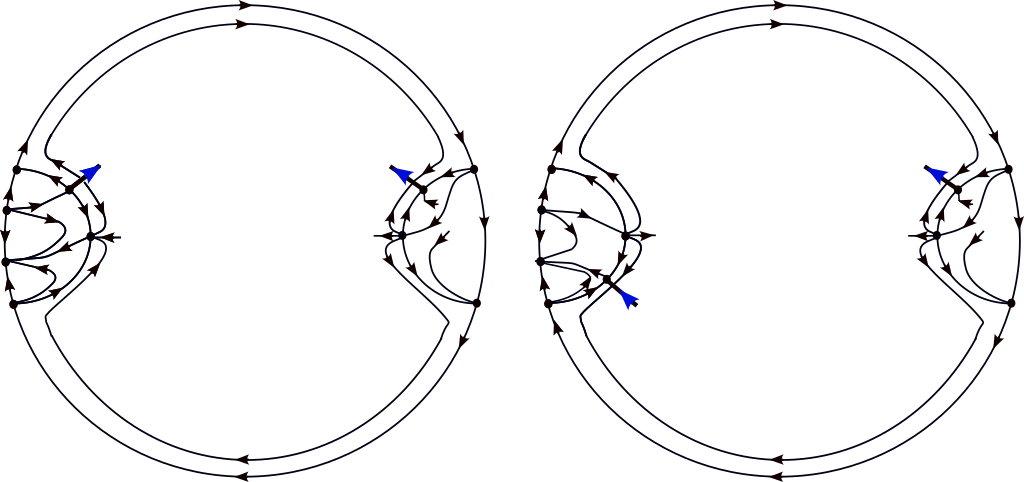}
		{\footnotesize
        \put(-233,-14){$A=1$, $n$ odd}
        \put(-88,-14){$A=1$, $n$ even}
             }
         \end{center}
 \caption{The phase portraits near infinity of \eqref{model-Lienard1}, with $m>2n+1$, $m$ even and $\epsilon=0$, and direction of the slow dynamics (blue).}
	\label{fig-mfamilyeven}
\end{figure}

\subsection{Fractal analysis near infinity}\label{sectionFAI}
In this section we present the fractal analysis for \eqref{model-Lienard1} near infinity, with $m$ and $n$ odd and $A=1$ if $m\ne 2n+1$ or $A>0$ if $m=2n+1$. This analysis  can be used, not only for proof of Theorems \ref{thm-1}--\ref{thm-3} in Section \ref{sec-proofs}, but also for the computation of the Minkowski dimension of \eqref{model-Lienard1} near infinity with a curve of singularities that is more general than the one given in Fig. \ref{fig-Motivation} and has a finite slow divergence integral (in a large compact set in the phase plane). We denote this integral by $-\tilde J$.

The main fractal results (Minkowski dimension and Minkowski nondegeneracy) are collected at the end of each section (see cases (a), (b), etc. in Sections \ref{m<frac}--\ref{section-analysism>}).

\subsubsection{The case $m<2n+1$}
\label{m<frac}
We consider system \eqref{model-Lienardm<y+} with $m<2n+1$, $m$ odd, $n$ odd and $A=1$. When $\epsilon=0$, \eqref{model-Lienardm<y+} has the curve of singularities $\bar x=\Phi_\pm(r)$ where $\Phi_+(0)=-1$, $\bar x=\Phi_+(r)$ is normally repelling, $\Phi_-(0)=1$, $\bar x=\Phi_-(r)$ is normally attracting and
\begin{equation}\label{Phipm}
    1-\Phi_\pm(r)^{n+1}-\sum_{k=0}^{n}b_kr^{n+1-k}\Phi_\pm(r)^k =0.
\end{equation}
If we substitute $\Phi_\pm(r)$ for $\bar x$ in the $r$-component of \eqref{model-Lienardm<y+}, then we get the slow dynamics along $\bar x=\Phi_\pm(r)$ (after division by $\epsilon$)
\begin{equation}
    \label{SD-bitno1}
    \frac{dr}{d\tau} = \frac{r^{2n+2-m}}{n+1}\left( \Phi_\pm(r)^m+ \sum_{k=0}^{m-1}a_kr^{m-k}\Phi_\pm(r)^k  \right).
\end{equation}
The slow divergence integral associated to $\bar x=\Phi_\pm(r)$ is given by 
\begin{equation}\label{bitno2}
    J_\pm(r)=-(n+1)\int_r^{\tilde r}  \frac{(n+1)\Phi_\pm(s)^{n}+\sum_{k=1}^{n}k b_ks^{n+1-k}\Phi_\pm(s)^{k-1}}{s^{2n+2-m}\left( \Phi_\pm(s)^m+ \sum_{k=0}^{m-1}a_ks^{m-k}\Phi_\pm(s)^k  \right)}ds <0
\end{equation}
where $\tilde r>0$ is small and fixed and $r\in]0,\tilde r[$. Notice that the divergence of \eqref{model-Lienardm<y+} on $\bar x=\Phi_\pm(r)$ is given by $-(n+1)\Phi_\pm(r)^{n}-\sum_{k=1}^{n}k b_kr^{n+1-k}\Phi_\pm(r)^{k-1}$ when $\epsilon=0$, while $d\tau$ is obtained from \eqref{SD-bitno1}. Since $m<2n+1$, we have that $J_\pm(r)$ monotonically go to $-\infty$ when $r\to 0$.
\smallskip

Let $\tilde J$ be a real constant. Assume that a sequence $(r_l)_{l\in\mathbb{N}}$ generated by
\begin{equation}\label{bitno3}
  J_-(r_l)-J_+(r_{l+1})=\tilde J , \  \ r_0\in]0,\tilde r[, \ l\in\mathbb N, 
\end{equation}
is well-defined, i.e. $r_l\to 0$ (monotonically) as $l\to \infty$. The existence of such a sequence $(r_l)_{l\in\mathbb{N}}$, for all $r_0>0$ small enough, will become clear later in this section (see cases (a)--(h)). Using \eqref{bitno3} we have 
\begin{equation}\label{bitno4}
  J_-(r_l)-J_+(r_{l})=\tilde J +J_+(r_{l+1}) -J_+(r_{l}), \ l\in\mathbb N.
\end{equation}

First, we study $J_-(r_l)-J_+(r_{l})$ in \eqref{bitno4}. We need the following result.
\begin{lemma}\label{label-LemmaPhi} We have
\begin{equation}\label{expansionPhi}
    \Phi_-(r)=-\Phi_+(r)-\frac{2}{n+1}\sum_{j=0}^{\frac{n-1}{2}}b_{2j+1}r^{n-2j}(1+O(r))
    \end{equation}
where the functions $\Phi_\pm$ are given in \eqref{Phipm}.
\end{lemma}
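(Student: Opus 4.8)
The plan is to exploit the parity symmetry $\phi\mapsto-\phi$ of the defining equation \eqref{Phipm}, which is available precisely because $n$ is odd (so $n+1$ is even), and then to correct the resulting approximate root by a single application of the Implicit Function Theorem. Write $H(\phi,r):=1-\phi^{n+1}-\sum_{k=0}^{n}b_kr^{n+1-k}\phi^{k}$, so that $\Phi_+$ and $\Phi_-$ are the roots of $H(\cdot,r)=0$ with $\Phi_+(0)=-1$ and $\Phi_-(0)=1$. Since $\partial_\phi H(\pm1,0)=-(n+1)(\pm1)^{n}=\mp(n+1)\neq0$ (using $n$ odd), the Implicit Function Theorem guarantees that $\Phi_\pm$ are smooth on a right neighborhood of $r=0$ and satisfy $\Phi_\pm(r)=\pm1+O(r)$.

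First I would record the algebraic identity obtained by replacing $\phi$ with $-\phi$. Using $(-\phi)^{n+1}=\phi^{n+1}$ and $(-\phi)^{k}=(-1)^{k}\phi^{k}$, only the odd-index terms survive the subtraction, giving
\[
H(-\phi,r)-H(\phi,r)=2\sum_{j=0}^{\frac{n-1}{2}}b_{2j+1}r^{n-2j}\phi^{2j+1}.
\]
Evaluating this at $\phi=\Phi_-(r)$ and using $H(\Phi_-(r),r)=0$ shows that $-\Phi_-(r)$ is an approximate root of $H(\cdot,r)$ with a controlled defect,
\[
H(-\Phi_-(r),r)=2\sum_{j=0}^{\frac{n-1}{2}}b_{2j+1}r^{n-2j}\Phi_-(r)^{2j+1},
\]
where both $-\Phi_-(r)$ and the genuine root $\Phi_+(r)$ lie near $-1$.

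Next I would compare these two nearby points by Taylor expanding $H(\cdot,r)$ about $\Phi_+(r)$ and using $H(\Phi_+(r),r)=0$, which yields
\[
H(-\Phi_-,r)=\partial_\phi H(\Phi_+,r)\,\bigl(-\Phi_--\Phi_+\bigr)+O\bigl((\Phi_-+\Phi_+)^{2}\bigr).
\]
Since $\partial_\phi H(\Phi_+,r)=-(n+1)\Phi_+^{n}+O(r)=(n+1)\bigl(1+O(r)\bigr)$ (again because $n$ is odd and $\Phi_+(0)=-1$) and $\Phi_-(r)^{2j+1}=1+O(r)$, solving for $\Phi_-+\Phi_+$ and combining the last three displays gives
\[
\Phi_-(r)+\Phi_+(r)=-\frac{2}{n+1}\sum_{j=0}^{\frac{n-1}{2}}b_{2j+1}r^{n-2j}\bigl(1+O(r)\bigr),
\]
which is exactly \eqref{expansionPhi}.

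The only point that needs care is the absorption of the quadratic remainder $O\bigl((\Phi_-+\Phi_+)^{2}\bigr)$ into the stated termwise error. If $j_b$ denotes the largest index with $b_{2j_b+1}\neq0$, then the leading term on the right-hand side is of order $r^{n-2j_b}$, whereas the remainder is of order $r^{2(n-2j_b)}$, which is $o(r^{n-2j_b})$ because $n-2j_b\ge1$; hence it is subsumed by the factor $1+O(r)$, and one correction step already delivers the claimed precision (iterating the Newton step would produce a full asymptotic series if desired). I expect this bookkeeping of error orders to be the only mild obstacle, the parity identity and the nondegeneracy $\partial_\phi H\neq0$ being entirely elementary.
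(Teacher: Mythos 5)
Your proof is correct, and it takes a genuinely different route from the paper's. The paper solves \eqref{Phipm} explicitly: it writes $\Phi_\pm(r)=\mp\bigl(1-\sum_{k=0}^{n}b_kr^{n+1-k}\Phi_\pm(r)^k\bigr)^{1/(n+1)}$, expands the root with the binomial series to get $\Phi_\pm(r)=\mp1\pm\frac{1}{n+1}\sum_{k=0}^{n}b_kr^{n+1-k}\bigl((\mp1)^k+O(r)\bigr)$, and then, to see that the even-index error terms cannot contaminate the answer, invokes uniqueness of the branches: when $b_1=b_3=\dots=b_n=0$ the defining equation is even in $\phi$, so $\Phi_-=-\Phi_+$ exactly, and combining this structural fact with the expansion yields \eqref{expansionPhi}. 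You never expand $\Phi_\pm$ individually beyond zeroth order; instead your parity identity $H(-\phi,r)-H(\phi,r)=2\sum_{j}b_{2j+1}r^{n-2j}\phi^{2j+1}$ exhibits $-\Phi_-$ as an approximate root of $H(\cdot,r)$ near $\Phi_+$, and one Taylor (Newton) step at $\Phi_+$, where $\partial_\phi H=(n+1)\bigl(1+O(r)\bigr)$ is nondegenerate, converts the defect into the difference $-\Phi_--\Phi_+$. This is the quantitative form of the same parity symmetry that the paper uses qualitatively: what your version buys is a self-contained error estimate (your remainder absorption replaces the paper's ``it can be easily seen'' step), and it also handles the symmetric case $b_o=0$ for free, since then $H(-\Phi_-,r)=0$ exactly and uniqueness gives $\Phi_-=-\Phi_+$; what it costs is the Taylor bookkeeping at the end.

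Two small points to tighten. First, with your own conventions $\Phi_+(0)=-1$, $\Phi_-(0)=1$, the conclusion of the Implicit Function Theorem should read $\Phi_\pm(r)=\mp1+O(r)$, not $\pm1+O(r)$ (a notational slip only; you use the correct signs later). Second, asserting that the quadratic remainder is $O\bigl(r^{2(n-2j_b)}\bigr)$ presupposes $\Phi_-+\Phi_+=O\bigl(r^{n-2j_b}\bigr)$, which is stronger than the a priori bound $O(r)$; you should either bootstrap your displayed equation finitely many times ($\Phi_-+\Phi_+=O(r^k)$ improves to $O\bigl(r^{\min(n-2j_b,\,k+1)}\bigr)$), or, more cleanly, write $O\bigl((\Phi_-+\Phi_+)^2\bigr)=(\Phi_-+\Phi_+)\cdot O(r)$ and absorb that factor into $\partial_\phi H(\Phi_+,r)=(n+1)\bigl(1+O(r)\bigr)$ before solving, which yields the claim in a single stroke.
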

\begin{proof}
We have for $r\sim 0$
\begin{align}\label{alig-Phi}
  \Phi_\pm(r)&=\mp\left( 1-\sum_{k=0}^{n}b_kr^{n+1-k}\Phi_\pm(r)^k \right)^{\frac{1}{n+1}} \nonumber \\
  &=\mp 1\pm \frac{1}{n+1} \left(\sum_{k=0}^{n}b_kr^{n+1-k}\Phi_\pm(r)^k\right)(1+O(r))\nonumber\\
  &=\mp 1\pm \frac{1}{n+1} \sum_{k=0}^{n}b_kr^{n+1-k}\left((\mp 1)^k+O(r)\right).
\end{align}
In the first step we used \eqref{Phipm}, in the second step the binomial series $(1-s)^\frac{1}{n+1}=1-\frac{1}{n+1}s+\dots$, for $s\sim 0$, and in the last step we used $\Phi_+(0)=-1$ and $\Phi_-(0)=1$. Since $\Phi_\pm$ are unique (the Implicit Function Theorem), from \eqref{Phipm} it follows that $\Phi_-(r)=-\Phi_+(r)$ when $b_1=b_3=\dots=b_n=0$. It can be easily seen that this and \eqref{alig-Phi} imply \eqref{expansionPhi}.
\end{proof}
Using Lemma \ref{label-LemmaPhi} and writing $\Phi_\pm=\Phi_\pm(s)$ we get
\begin{align}\label{alig-J}
 &\frac{(n+1)\Phi_-^{n}+\sum_{k=1}^{n}k b_ks^{n+1-k}\Phi_-^{k-1}}{ \Phi_-^m+ \sum_{k=0}^{m-1}a_ks^{m-k}\Phi_-^k}  \nonumber\\
 & \qquad = \frac{(n+1)(-\Phi_+)^{n}+\sum_{k=1}^{n}k b_ks^{n+1-k}(-\Phi_+)^{k-1}}{ (-\Phi_+)^m+ \sum_{k=0}^{m-1}a_ks^{m-k}(-\Phi_+)^k}\nonumber\\
 & \qquad \ \ \ +\sum_{j=0}^{\frac{n-1}{2}}b_{2j+1}s^{n-2j}(2(m-n)+O(s))\nonumber\\
  & \qquad = \frac{-(n+1)\Phi_+^{n}-\sum_{k=1}^{n}k b_ks^{n+1-k}\Phi_+^{k-1}+2\sum_{j=0}^{\frac{n-1}{2}}(2j+1)b_{2j+1}s^{n-2j}\Phi_+^{2j}}{ -\Phi_+^m- \sum_{k=0}^{m-1}a_ks^{m-k}\Phi_+^k+2\sum_{j=0}^{\frac{m-1}{2}}a_{2j}s^{m-2j}\Phi_+^{2j}}\nonumber\\
 & \qquad \ \ \ +\sum_{j=0}^{\frac{n-1}{2}}b_{2j+1}s^{n-2j}(2(m-n)+O(s))\nonumber\\
 & \qquad = \frac{(n+1)\Phi_+^{n}+\sum_{k=1}^{n}k b_ks^{n+1-k}\Phi_+^{k-1}}{ \Phi_+^m+ \sum_{k=0}^{m-1}a_ks^{m-k}\Phi_+^k}-\sum_{j=0}^{\frac{m-1}{2}}a_{2j}s^{m-2j}(2(n+1)+O(s))\nonumber\\
  & \qquad \ \ \ +\sum_{j=0}^{\frac{n-1}{2}}b_{2j+1}s^{n-2j}(2(m-n+2j+1)+O(s)).
\end{align}
In the first step we used \eqref{expansionPhi} and the expansion of order $1$ in powers of the sum in \eqref{expansionPhi} at zero, and in the last step we used the expansion of order $1$ in powers of the sums $2\sum_{j=0}^{\frac{n-1}{2}}$ and $2\sum_{j=0}^{\frac{m-1}{2}}$ at zero.
From \eqref{bitno2} and \eqref{alig-J} it follows that
\begin{align}\label{alig-JJJ}
 J_-(r_l)-&J_+(r_{l})=-(n+1)\int_{r_l}^{\tilde r}  \frac{1}{s^{2n+2-m}}\Bigg(  -\sum_{j=0}^{\frac{m-1}{2}}a_{2j}s^{m-2j}(2(n+1)+O(s))\nonumber \\ 
 & \ \  +\sum_{j=0}^{\frac{n-1}{2}}b_{2j+1}s^{n-2j}(2(m-n+2j+1)+O(s)) \Bigg) ds\nonumber \\
 &= -(n+1)\Bigg(  -\sum_{j=0}^{\frac{m-1}{2}}a_{2j}\int_{r_l}^{\tilde r}s^{2(m-n-j-1)}(2(n+1)+O(s))ds\nonumber \\ 
 & \ \ +\sum_{j=0}^{\frac{n-1}{2}}b_{2j+1}\int_{r_l}^{\tilde r}s^{m-n-2j-2}(2(m-n+2j+1)+O(s))ds \Bigg)\nonumber \\
 &= -\sum_{j=0}^{\frac{m-1}{2}}a_{2j}\frac{2(n+1)^2}{2m-2n-2j-1} r_{l}^{2m-2n-2j-1}(1+o(1))\nonumber \\ 
 & \ \ +\sum_{j=0}^{\frac{n-1}{2}}b_{2j+1}\frac{2(n+1)(m-n+2j+1)}{m-n-2j-1} r_l^{m-n-2j-1}(1+o(1))+\bar J
\end{align}
where the $o(1)$-terms tend to zero as $r_l\to 0$ and $\bar J$ is a constant independent of $r_l$. In the last step we used the fact that $2(m-n-j-1)$ and $m-n-2j-2$ are even, thus $\ne -1$ ($m,n$ are odd).

Using \eqref{bitno2}, the term $J_+(r_{l+1}) -J_+(r_{l})$ in \eqref{bitno4} can be written as
\begin{equation}\label{desno}
J_+(r_{l+1}) -J_+(r_{l})=-(n+1)^2\int_{r_{l+1}}^{r_l}\frac{1}{s^{2n+2-m}}(1+O(s))ds.
\end{equation}
If we use \eqref{bitno4}, \eqref{alig-JJJ} and the substitution $s=r_l t$ in the integral in \eqref{desno}, we get 
\begin{align}\label{desno-lijevo}
 -(n&+1)^2\int_{\frac{r_{l+1}}{r_l}}^{1}\frac{1}{t^{2n+2-m}}(1+O(r_lt))dt\nonumber\\
 &= -\sum_{j=0}^{\frac{m-1}{2}}a_{2j}\frac{2(n+1)^2}{2m-2n-2j-1} r_{l}^{m-2j}(1+o(1))\nonumber \\ 
 & \ \ +\sum_{j=0}^{\frac{n-1}{2}}b_{2j+1}\frac{2(n+1)(m-n+2j+1)}{m-n-2j-1} r_l^{n-2j}(1+o(1))+r_l^{2n+1-m}(\bar J-\tilde J).
\end{align}
Since the right-hand side of \eqref{desno-lijevo} tends to zero as $l\to \infty$ (note that $m<2n+1$), we have that $\frac{r_{l+1}}{r_l}\to 1$ as $l\to\infty$. This and the fact that $\frac{\rho}{r_l^{2n+2-m}}\le  \frac{1}{s^{2n+2-m}}(1+O(s))\le \frac{1}{\rho r_{l+1}^{2n+2-m}}$ for all $s\in [r_{l+1},r_l]$, with $\rho>0$ small enough, imply that the integral in \eqref{desno} has the following property:
\begin{equation}
\label{asimptotica}
r_l^{2n+2-m}\int_{r_{l+1}}^{r_l}\frac{1}{s^{2n+2-m}}(1+O(s))ds\simeq r_l-r_{l+1}, \ \ l\to\infty.
\end{equation}

Let's recall that $j_a$, $j_b$, $a_e$ and $b_o$ are defined in Section \ref{sec-Motivation} before Theorem \ref{thm-1}. In cases (a)--(g) below we assume that at least one of $j_a$ 
 and $j_b$ is well-defined.

\paragraph{(a) the case ($n-2j_b<m-2j_a$ or $a_e=0$) and $m-n-2j_b-1< 0$.} Since $n-2j_b<m-2j_a$ or $a_e=0$, \eqref{alig-JJJ} implies that 
\begin{equation}\label{eqcase1}
J_-(r)-J_+(r)=b_{2j_b+1}\frac{2(n+1)(m-n+2j_b+1)}{m-n-2j_b-1} r^{m-n-2j_b-1}(1+o(1))+\bar J,
\end{equation}
where $o(1)\to 0$ as $r\to 0$. Assume first that $b_{2j_b+1}(m-n+2j_b+1)>0$. From \eqref{eqcase1} and $m-n-2j_b-1< 0$ it follows that $J_-(r)-(J_+(r)+\bar J)<0$, for all $r>0$ small enough, and that $J_-(r)-(J_+(r)+\bar J)\to -\infty$ as $r\to 0$. Now, it is clear that $(r_l)_{l\in\mathbb{N}}$ generated by $J_-(r_l)-(J_+(r_{l+1})+\bar J)=\tilde J-\bar J$ (or, equivalently, by \eqref{bitno3}) is well-defined for each $r_0>0$ small enough, i.e. it tends monotonically to zero as $l\to +\infty$. We also used the fact that $J_\pm(r) \to -\infty$ as $r\to 0$. Using \eqref{bitno4}, \eqref{desno}, \eqref{asimptotica}, \eqref{eqcase1} and $m-n-2j_b-1< 0$, finally we get 
\begin{equation}\label{finally11}
   r_l-r_{l+1}  \simeq r_l^{n+1-2j_b}, \ l\to \infty.
\end{equation}
Since $n+1-2j_b>1$ (note that $j_b\le \frac{n-1}{2}$), \eqref{finally11} and \cite[Theorem 1]{EZZ} imply that the sequence $(r_l)_{l\in\mathbb{N}}$ is Minkowski nondegenerate,
\[\dim_B(r_l)_{l\in\mathbb{N}}=1-\frac{1}{n+1-2j_b}=\frac{n-2j_b}{n+1-2j_b}\]
and these results are independent of the choice of $r_0$.

If $b_{2j_b+1}(m-n+2j_b+1)<0$, then $(r_l)_{l\in\mathbb{N}}$ is generated by $J_-(r_{l+1})-J_+(r_{l})=\tilde J$, instead of \eqref{bitno3}. Using similar computations we get \eqref{finally11} and the same Minkowski dimension as above.

\paragraph{(b) the case ($n-2j_b<m-2j_a$ or $a_e=0$) and $m-n-2j_b-1>0$.} Since $n-2j_b<m-2j_a$ or $a_e=0$, we have \eqref{eqcase1}. Assume that $b_{2j_b+1}<0$. From \eqref{eqcase1} and $m-n-2j_b-1>0$ it follows that $J_-(r)-(J_+(r)+\bar J)<0$, for all $r>0$ small enough, and that $J_-(r)-(J_+(r)+\bar J)\to 0$ as $r\to 0$. This implies that, for $\tilde J-\bar J\ge 0$, $(r_l)_{l\in\mathbb{N}}$ generated by $J_-(r_l)-(J_+(r_{l+1})+\bar J)=\tilde J-\bar J$ (i.e. by \eqref{bitno3}) is well-defined for each $r_0>0$ small enough, i.e. it tends monotonically to zero as $l\to +\infty$. If $\tilde J-\bar J> 0$, then \eqref{bitno4}, \eqref{desno}, \eqref{asimptotica}, \eqref{eqcase1} and $m-n-2j_b-1> 0$ give 
\begin{equation}\label{finally111}
   r_l-r_{l+1}  \simeq r_l^{2n+2-m}, \ l\to \infty.
\end{equation}
Using \eqref{finally111}, \cite[Theorem 1]{EZZ} and the fact that $2n+2-m>1$ we have that $(r_l)_{l\in\mathbb{N}}$ is Minkowski nondegenerate,
\begin{equation}\label{boxdim4444}
\dim_B(r_l)_{l\in\mathbb{N}}=1-\frac{1}{2n+2-m}=\frac{2n+1-m}{2n+2-m}
\end{equation}
and these results are independent of the choice of $r_0$. If $\tilde J-\bar J=0$, then \eqref{bitno4}, \eqref{desno}, \eqref{asimptotica} and \eqref{eqcase1} imply \eqref{finally11} and, thus, the same Minkowski dimension of $(r_l)_{l\in\mathbb{N}}$ like in case (a).

If $\tilde J-\bar J<0$, then $(r_l)_{l\in\mathbb{N}}$ is generated by $J_-(r_{l+1})-J_+(r_{l})=\tilde J$, for each $r_0>0$ small enough. The fractal analysis of $(r_l)_{l\in\mathbb{N}}$ in this case is similar to the fractal analysis in case $\tilde J-\bar J> 0$. We obtain \eqref{finally111} and \eqref{boxdim4444}.

Assume now that $b_{2j_b+1}>0$. This is similar to the case $b_{2j_b+1}<0$. If $\tilde J-\bar J>0$, then $(r_l)_{l\in\mathbb{N}}$ is generated by \eqref{bitno3}, for each $r_0>0$ small enough, and we have \eqref{finally111} and \eqref{boxdim4444}. If $\tilde J-\bar J\le 0$, then $(r_l)_{l\in\mathbb{N}}$ is generated by $J_-(r_{l+1})-J_+(r_{l})=\tilde J$, for each $r_0>0$ small enough. When $\tilde J-\bar J< 0$, we have \eqref{finally111} and \eqref{boxdim4444}, and when $\tilde J-\bar J=0$, we have \eqref{finally11} and the same Minkowski dimension of $(r_l)_{l\in\mathbb{N}}$ like in case (a).

\paragraph{(c) the case ($n-2j_b>m-2j_a$ or $b_o=0$) and $2m-2n-2j_a-1<0$.} The fractal analysis in this case is analogous to the fractal analysis in case (a). Since $n-2j_b>m-2j_a$ or $b_o=0$, \eqref{alig-JJJ} implies that 
\begin{equation}\label{eqcase1234}
J_-(r)-J_+(r)=-a_{2j_a}\frac{2(n+1)^2}{2m-2n-2j_a-1} r^{2m-2n-2j_a-1}(1+o(1))+\bar J,
\end{equation}
where $o(1)\to 0$ as $r\to 0$. Assume first that $a_{2j_a}<0$. It follows that $(r_l)_{l\in\mathbb{N}}$ generated  by \eqref{bitno3} is well-defined for each $r_0>0$ small enough (i.e. it tends monotonically to zero as $l\to +\infty$). Using \eqref{bitno4}, \eqref{desno}, \eqref{asimptotica}, \eqref{eqcase1234} and $2m-2n-2j_a-1<0$ we get 
\begin{equation}\label{finally11234}
   r_l-r_{l+1}  \simeq r_l^{m+1-2j_a}, \ l\to \infty.
\end{equation}
Since $m+1-2j_a>1$ ($j_a\le \frac{m-1}{2}$), \eqref{finally11234} and \cite[Theorem 1]{EZZ} imply that the sequence $(r_l)_{l\in\mathbb{N}}$ is Minkowski nondegenerate,
\[\dim_B(r_l)_{l\in\mathbb{N}}=1-\frac{1}{m+1-2j_a}=\frac{m-2j_a}{m+1-2j_a}\]
and these results don't depend on the choice of $r_0$.

If $a_{2j_a}>0$, then $(r_l)_{l\in\mathbb{N}}$ is generated by $J_-(r_{l+1})-J_+(r_{l})=\tilde J$, for each $r_0>0$ small enough. Using similar computations we get \eqref{finally11234} and the same Minkowski dimension as above.

\paragraph{(d) the case ($n-2j_b>m-2j_a$ or $b_o=0$) and $2m-2n-2j_a-1>0$.} The fractal analysis in this case is analogous to the fractal analysis in case (b). We use \eqref{eqcase1234}. Assume that $a_{2j_a}>0$. For $\tilde J-\bar J\ge 0$, $(r_l)_{l\in\mathbb{N}}$ is generated by \eqref{bitno3} for each $r_0>0$ small enough. If $\tilde J-\bar J> 0$, then we have  \eqref{finally111} and \eqref{boxdim4444}.
 If $\tilde J-\bar J=0$, then we have \eqref{finally11234} and, thus, the same Minkowski dimension of $(r_l)_{l\in\mathbb{N}}$ like in case (c). If $\tilde J-\bar J<0$, then $(r_l)_{l\in\mathbb{N}}$ is generated by $J_-(r_{l+1})-J_+(r_{l})=\tilde J$, for each $r_0>0$ small enough. We obtain \eqref{finally111} and \eqref{boxdim4444}.

 Assume now that $a_{2j_a}<0$. If $\tilde J-\bar J>0$, then $(r_l)_{l\in\mathbb{N}}$ is generated by \eqref{bitno3}, for each $r_0>0$ small enough, and we have \eqref{finally111} and \eqref{boxdim4444}. If $\tilde J-\bar J\le 0$, then $(r_l)_{l\in\mathbb{N}}$ is generated by $J_-(r_{l+1})-J_+(r_{l})=\tilde J$, for each $r_0>0$ small enough. When $\tilde J-\bar J< 0$, we have \eqref{finally111} and \eqref{boxdim4444}, and when $\tilde J-\bar J=0$, we have \eqref{finally11234} and the same Minkowski dimension of $(r_l)_{l\in\mathbb{N}}$ like in case (c).

 In cases (e), (f) and (g) we write $C:=b_{2j_b+1}(m-n+2j_b+1)-a_{2j_a}(n+1)$.

 \paragraph{(e) the case $n-2j_b=m-2j_a$, $C\ne 0$ and $m-n-2j_b-1< 0$.} The fractal analysis in this case is analogous to the fractal analysis in case (a). We have 
 \begin{equation}\label{eqcasezadnjiiii}
J_-(r)-J_+(r)=C\frac{2(n+1)}{m-n-2j_b-1} r^{m-n-2j_b-1}(1+o(1))+\bar J.
\end{equation}
If $C>0$ (resp. $C<0$), then we have the same analysis and results as in case (a) with $b_{2j_b+1}(m-n+2j_b+1)>0$ (resp. $b_{2j_b+1}(m-n+2j_b+1)<0$).

\paragraph{(f) the case $n-2j_b=m-2j_a$, $C\ne 0$ and $m-n-2j_b-1>0$.} The fractal analysis in this case is analogous to the fractal analysis in case (b). We have \eqref{eqcasezadnjiiii}. If $C<0$ (resp. $C>0$), then we have the same analysis and results as in case (b) with $b_{2j_b+1}<0$ (resp. $b_{2j_b+1}>0$). 

 \paragraph{(g) the case $n-2j_b=m-2j_a$ and $C= 0$.} This is a topic of further study.

\paragraph{(h) the case $a_e=b_o=0$.} From \eqref{alig-JJJ} it follows that $J_-(r)=J_+(r)$. This implies that $(r_l)_{l\in\mathbb{N}}$, generated by \eqref{bitno3} with $\tilde J=0$, is a constant sequence (hence its Minkowski dimension is $0$). If $\tilde J>0$ (resp. $\tilde J<0$), then $(r_l)_{l\in\mathbb{N}}$, generated by \eqref{bitno3} (resp. by $J_-(r_{l+1})-J_+(r_{l})=\tilde J$) tends monotonically to $0$, for each small $r_0>0$. If $\tilde J>0$, then using \eqref{bitno4}, \eqref{desno} and \eqref{asimptotica} we get \eqref{finally111} and \eqref{boxdim4444}. We obtain the same result when $\tilde J<0$.

\subsubsection{The case $m=2n+1$} 
\label{section-analysism=}
We consider system \eqref{model-Lienardm===y+} with $n$ odd and $A>0$. We use the notation from Section \ref{m<frac}. For $\epsilon=0$, we denote by $\bar x=\Phi_-(r)$ with $\Phi_-(0)=1$ (resp. $\bar x=\Phi_+(r)$ with $\Phi_+(0)=-1$) the normally attracting (resp. repelling) curve of singularities of \eqref{model-Lienardm===y+}. $\Phi_-(r)$ and $\Phi_+(r)$ satisfy \eqref{Phipm} and have the property \eqref{expansionPhi} in Lemma \ref{label-LemmaPhi}. The slow dynamics along $\bar x=\Phi_\pm(r)$ is given by $\frac{dr}{d\tau} = \frac{r}{n+1}\left( A\Phi_\pm(r)^{2n+1}+ \sum_{k=0}^{2n}a_kr^{2n+1-k}\Phi_\pm(r)^k  \right)$,
and the slow divergence integral associated to $\bar x=\Phi_\pm(r)$ is given by 
\begin{equation}\label{bitno2====}
    J_\pm(r)=-(n+1)\int_r^{\tilde r}  \frac{(n+1)\Phi_\pm(s)^{n}+\sum_{k=1}^{n}k b_ks^{n+1-k}\Phi_\pm(s)^{k-1}}{s\left( A\Phi_\pm(s)^{2n+1}+ \sum_{k=0}^{2n}a_ks^{2n+1-k}\Phi_\pm(s)^k  \right)}ds <0
\end{equation}
where $\tilde r>0$ is small and fixed and $r\in]0,\tilde r[$ (see Section \ref{m<frac}). Assume that a sequence $(r_l)_{l\in\mathbb{N}}$, defined by \eqref{bitno3}, or equivalently by \eqref{bitno4}, monotonically tends to zero as $l\to \infty$. Later in this section it will be clear when this is possible (for each initial point $r_0>0$ small enough).

Using the same steps as in \eqref{alig-J} and $\Phi_\pm=\Phi_\pm(s)$ we get
\begin{align}
 &\frac{(n+1)\Phi_-^{n}+\sum_{k=1}^{n}k b_ks^{n+1-k}\Phi_-^{k-1}}{A \Phi_-^{2n+1}+ \sum_{k=0}^{2n}a_ks^{2n+1-k}\Phi_-^k}  \nonumber\\
 & \qquad = \frac{(n+1)\Phi_+^{n}+\sum_{k=1}^{n}k b_ks^{n+1-k}\Phi_+^{k-1}}{A \Phi_+^{2n+1}+ \sum_{k=0}^{2n}a_ks^{2n+1-k}\Phi_+^k}-\sum_{j=0}^{n}a_{2j}s^{2n+1-2j}(\frac{2}{A^2}(n+1)+O(s))\nonumber\\
  & \qquad \ \ \ +\sum_{j=0}^{\frac{n-1}{2}}b_{2j+1}s^{n-2j}(\frac{2}{A}(n+2j+2)+O(s))\nonumber,
\end{align}
and then, using \eqref{bitno2====}, 
\begin{align}\label{alig-JJJ=====}
 J_-(r_l)-&J_+(r_{l})=\nonumber \\
 & -\sum_{j=0}^{n}a_{2j}\frac{2(n+1)^2}{A^2(2n+1-2j)} r_{l}^{2n+1-2j}(1+o(1))\nonumber \\ & \ \ +\sum_{j=0}^{\frac{n-1}{2}}b_{2j+1}\frac{2(n+1)(n+2j+2)}{A(n-2j)} r_l^{n-2j}(1+o(1))+\bar J
\end{align}
where $o(1)$-functions tend to $0$ as $r_l\to 0$ and $\bar J$ is a constant independent of $r_l$.

On the other hand, we have 
\begin{equation}\label{desno=====}
J_+(r_{l+1}) -J_+(r_{l})=-\frac{(n+1)^2}{A}\int_{r_{l+1}}^{r_l}\frac{1}{s}(1+O(s))ds.
\end{equation}
If we use \eqref{bitno4}, \eqref{alig-JJJ=====} and the substitution $s=r_l t$ in the integral in \eqref{desno=====}, we obtain 
\begin{align}\label{desno-lijevo=====}
 \lim_{l\to\infty}\int_{\frac{r_{l+1}}{r_l}}^{1}\frac{1}{t}(1+O(r_lt))dt=\frac{A}{(n+1)^2}(\tilde J-\bar J).\nonumber
\end{align}
For $\tilde J-\bar J\ge 0$, this implies that
\begin{equation}
\label{limit-m=}
\lim_{l\to\infty}\frac{r_{l+1}}{r_l}=e^{-\frac{A}{(n+1)^2}(\tilde J-\bar J)}\in ]0,1].
\end{equation}
When $\tilde J-\bar J= 0$, we will need the following property of the integral in \eqref{desno=====}:
\begin{equation}
\label{asimptotica=====}
r_l\int_{r_{l+1}}^{r_l}\frac{1}{s}(1+O(s))ds\simeq r_l-r_{l+1}, \ \ l\to\infty.
\end{equation}
\eqref{asimptotica=====} can be proved in the same fashion  as \eqref{asimptotica} using \eqref{limit-m=}.

\begin{remark}\label{remarkbitno} From \eqref{bitno2====} it follows that $J_\pm(r)$ monotonically tend to $-\infty$ as $r\to 0$ and from \eqref{alig-JJJ=====} it follows that $J_-(r)-(J_+(r)+\bar J)\to 0$ as $r\to 0$. 
\end{remark}

\paragraph{(a) the case $\tilde J-\bar J\ne 0$.} Assume first that $\tilde J-\bar J>0$. Remark \ref{remarkbitno} implies that $(r_l)_{l\in\mathbb{N}}$, defined by $J_-(r_l)-(J_+(r_{l+1})+\bar J)=\tilde J-\bar J$ (i.e. by \eqref{bitno3}), tends monotonically to zero as $l\to +\infty$, for each sufficiently small initial point $r_0>0$. Since $\tilde J-\bar J>0$, from \eqref{limit-m=} it follows that there exists $\lambda\in ]0,1[$ and a constant $C>0$ such that $0<r_l\le C \lambda^l$ for all $l$ (i.e. $(r_l)_{l\in\mathbb{N}}$ converges exponentially to zero). Following \cite[Lemma 1]{EZZ} we have that $\dim_B(r_l)_{l\in\mathbb{N}}=0$.

If $\tilde J-\bar J<0$, Remark \ref{remarkbitno} implies that $(r_l)_{l\in\mathbb{N}}$, defined by $J_-(r_{l+1})-J_+(r_{l})=\tilde J$, tends monotonically to zero as $l\to +\infty$, for each small $r_0>0$. It can be proved in a similar way that $\dim_B(r_l)_{l\in\mathbb{N}}=0$.

\paragraph{(b) the case ($n-2j_b<2n+1-2j_a$ or $a_e=0$) and $\tilde J-\bar J=0$.} Since $n-2j_b<2n+1-2j_a$ or $a_e=0$, \eqref{alig-JJJ=====} implies that 
\begin{equation}
J_-(r)-J_+(r)=b_{2j_b+1}\frac{2(n+1)(n+2j_b+2)}{A(n-2j_b)} r^{n-2j_b}(1+o(1))+\bar J,\nonumber
\end{equation}
where $o(1)\to 0$ as $r\to 0$. This, together with \eqref{bitno4}, \eqref{desno=====}, \eqref{asimptotica=====} and Remark \ref{remarkbitno}, implies that we have \eqref{finally11}, with the Minkowski dimension given after \eqref{finally11}. See the cases ($b_{2j_b+1}<0,\tilde J-\bar J=0$) and ($b_{2j_b+1}>0,\tilde J-\bar J=0$) in case (b) in Section \ref{m<frac}.

\paragraph{(c) the case ($n-2j_b>2n+1-2j_a$ or $b_o=0$) and $\tilde J-\bar J=0$.} Since $n-2j_b>2n+1-2j_a$ or $b_o=0$, \eqref{alig-JJJ=====} implies that 
\begin{equation}
J_-(r)-J_+(r)=-a_{2j_a}\frac{2(n+1)^2}{A^2(2n+1-2j_a)} r^{2n+1-2j_a}(1+o(1))+\bar J,\nonumber
\end{equation}
where $o(1)\to 0$ as $r\to 0$. This, together with \eqref{bitno4}, \eqref{desno=====}, \eqref{asimptotica=====} and Remark \ref{remarkbitno}, implies that we have \eqref{finally11234} and the Minkowski dimension given after \eqref{finally11234} with $m=2n+1$. See the cases ($a_{2j_a}>0,\tilde J-\bar J=0$) and ($a_{2j_a}<0,\tilde J-\bar J=0$) in case (d) in Section \ref{m<frac}.

\paragraph{(d) the case $n-2j_b=2n+1-2j_a$, $C:=b_{2j_b+1}(n+2j_b+2)-a_{2j_a}\frac{n+1}{A}\ne 0$ and $\tilde J-\bar J=0$.} From \eqref{alig-JJJ=====} it follows that 
\begin{equation}
J_-(r)-J_+(r)=C\frac{2(n+1)}{A(n-2j_b)} r^{n-2j_b}(1+o(1))+\bar J,\nonumber
\end{equation}
where $o(1)\to 0$ as $r\to 0$. This, together with \eqref{bitno4}, \eqref{desno=====}, \eqref{asimptotica=====} and Remark \ref{remarkbitno}, implies that we have \eqref{finally11}, with the Minkowski dimension given after \eqref{finally11}. This is similar to case (b) in this section. 

\paragraph{(e) the case $n-2j_b=2n+1-2j_a$, $C=0$ and $\tilde J-\bar J=0$.} This is a topic of further study.

\paragraph{(f) the case $a_e=b_o=0$ and $\tilde J-\bar J=0$.} Here we deal with constant sequences $(r_l)_{l\in\mathbb{N}}$ (with trivial Minkowski dimension).

\subsubsection{The case $m>2n+1$}
\label{section-analysism>}
In this section we focus on the fractal analysis of \eqref{model-Lienard1} near infinity, with $m>2n+1$, $m$ odd, $n$ odd and $A=1$. Consider system \eqref{model-Lienardm>+} (resp. system \eqref{model-Lienardm>-}) from Section \ref{section-inf-m>}. The slow dynamics along the curve of singularities \[\bar y=r^{\frac{m+1}{2}-n-1}\left(1+\sum_{k=0}^{n}b_kr^{n+1-k}\right)\]\[\left(\text{resp. } \bar y=r^{\frac{m+1}{2}-n-1}\left(1+\sum_{k=0}^{n}b_k(-1)^k r^{n+1-k}\right)\right)\] of \eqref{model-Lienardm>+} (resp. \eqref{model-Lienardm>-}) is given in \eqref{slow-dyn-m>} (resp. \eqref{slow-dyn-m>>}), and the slow divergence integral
associated to the curve of singularities is given by
\begin{equation}\label{SDI999-}
\bar J_-( r)=-\int_{0}^{ r}\frac{s^{m-2n-2}\left(n+1+\sum_{k=1}^{n}kb_ks^{n+1-k}\right)^2}{1+ \sum_{k=0}^{m-1}a_ks^{m-k}}ds<0
\end{equation}
\begin{equation}\label{SDI999+}
\left(\text{resp. }\bar J_+( r)=-\int_{0}^{ r}\frac{s^{m-2n-2}\left(n+1+\sum_{k=1}^{n}kb_k(-1)^ks^{n+1-k}\right)^2}{1+ \sum_{k=0}^{m-1}a_k(-1)^{k+1}s^{m-k}}ds<0\right)
\end{equation}
where $ r>0$ is small. It is clear that $\bar J_\pm$ monotonically tend to $0$ as $r\to 0$.
Let's recall that in Section \ref{section-inf-m>} ($m$ odd)  we use the Poincar\'{e}--Lyapunov compactification
of degree $(1, \frac{m+1}{2})$ and find \eqref{model-Lienardm>+} (resp. \eqref{model-Lienardm>-}) in the positive (resp. negative) $x$-direction. It will be more convenient to parameterize the above curves of singularities by $\hat r>0$ instead of $r>0$ where $\hat r$ comes from the transformation in the positive $y$-direction: $x=\frac{\hat x}{\hat r}, y=\frac{1}{\hat r^\frac{m+1}{2}}$. Now, since $\frac{1}{\hat r^\frac{m+1}{2}}=y=\frac{\bar y}{r^{\frac{m+1}{2}}}$ (see the coordinate changes above \eqref{model-Lienardm>+} and \eqref{model-Lienardm>-}), we have the following connection between $r$ and $\hat r$ on the curves of singularities: 
\begin{equation}\label{rhatr1}
r=\hat r^{\frac{m+1}{2(n+1)}}\left(1+\sum_{k=0}^{n}b_kr^{n+1-k}\right)^\frac{1}{n+1}
\end{equation}
\begin{equation}\label{rhatr2}\left(\text{resp. } r=\hat r^{\frac{m+1}{2(n+1)}}\left(1+\sum_{k=0}^{n}b_k(-1)^k r^{n+1-k}\right)^\frac{1}{n+1}\right).
\end{equation}
We denote by $r=\psi_- (\hat r)$ (resp. $r=\psi_+ (\hat r)$), with small $\hat r\ge 0$, the unique solution to \eqref{rhatr1} (resp. \eqref{rhatr2}) with $\psi_- (0)=0$ (resp. $\psi_+ (0)=0$). We use The Implicit Function Theorem. In similar fashion to proving Lemma \ref{label-LemmaPhi}, we can prove
\begin{lemma}\label{label-Lemmapsi} We have $\psi_\pm (\hat r)=\hat r^{\frac{m+1}{2(n+1)}}\left(1+o(1)\right)$ and 
\begin{equation}\label{expansionpsinovo}
    \psi_-(\hat r)-\psi_+(\hat r)=\frac{2}{n+1}\sum_{j=0}^{\frac{n-1}{2}}b_{2j+1}\hat r^{\frac{(n+1-2j)(m+1)}{2(n+1)}}(1+o(1))\nonumber
    \end{equation}
    where $o(1)$-functions tend to $0$ as $\hat r\to 0$.
\end{lemma}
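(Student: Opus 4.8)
The plan is to mirror the proof of Lemma~\ref{label-LemmaPhi}, the only genuinely new feature being the fractional exponent $\frac{m+1}{2(n+1)}$ appearing in \eqref{rhatr1}--\eqref{rhatr2}. First I would remove this fractional power by the substitution $u=\hat r^{\frac{m+1}{2(n+1)}}$, which turns \eqref{rhatr1} and \eqref{rhatr2} into
\begin{equation}
r=u\bigl(1+p(r)\bigr)^{\frac{1}{n+1}},\qquad r=u\bigl(1+q(r)\bigr)^{\frac{1}{n+1}},\nonumber
\end{equation}
where $p(r)=\sum_{k=0}^{n}b_kr^{n+1-k}$ and $q(r)=\sum_{k=0}^{n}b_k(-1)^kr^{n+1-k}$. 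Because $b_0=b_1=0$ (assumption \eqref{assum1}) the lowest power in $p$ and $q$ is $r^1$, so both brackets are $1+O(r)$ and equal $1$ at $r=0$; hence the right-hand sides are smooth in $(r,u)$ near the origin. Writing $H(r,u)=r-u(1+p(r))^{1/(n+1)}$ we have $H(0,0)=0$ and $\partial_rH(0,0)=1\neq0$, so the Implicit Function Theorem yields a unique smooth solution $r=\phi_-(u)$ with $\phi_-(0)=0$ and $\phi_-'(0)=1$; then $\psi_-(\hat r)=\phi_-(\hat r^{\frac{m+1}{2(n+1)}})$, and likewise for $\psi_+$. From $\phi_\pm(u)=u+O(u^2)$ the first claim $\psi_\pm(\hat r)=\hat r^{\frac{m+1}{2(n+1)}}(1+o(1))$ follows, and I would write $\rho:=\hat r^{\frac{m+1}{2(n+1)}}$ for this common leading scale.

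For the difference I would expand the $(n+1)$-th root with the binomial series $(1+s)^{1/(n+1)}=1+\frac{1}{n+1}s+O(s^2)$, legitimate since $p(\psi_-),q(\psi_+)=O(\rho)\to0$. Subtracting the two defining equations gives
\begin{equation}
\psi_-(\hat r)-\psi_+(\hat r)=\frac{\rho}{n+1}\bigl(p(\psi_-)-q(\psi_+)\bigr)(1+o(1)).\nonumber
\end{equation}
The key algebraic observation, exactly as in Lemma~\ref{label-LemmaPhi}, is that $p$ and $q$ share the same even-index part and differ only in their odd-index terms, so $p(r)-q(r)=2\sum_{j=0}^{(n-1)/2}b_{2j+1}r^{n-2j}$; in particular $p=q$ and $\psi_-=\psi_+$ whenever all $b_{2j+1}$ vanish, so the whole difference is produced by the broken symmetry. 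Splitting $p(\psi_-)-q(\psi_+)=\bigl(p(\psi_-)-p(\psi_+)\bigr)+\bigl(p(\psi_+)-q(\psi_+)\bigr)$ and applying the mean value theorem to the first bracket (with $p'=O(1)$) leaves a self-consistent equation $D=\frac{\rho}{n+1}\bigl(p'(\xi)D+2\sum_j b_{2j+1}\psi_+^{\,n-2j}\bigr)(1+o(1))$ for $D:=\psi_--\psi_+$.

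Finally I would solve this for $D$: the coefficient of $D$ on the right is $\frac{\rho}{n+1}p'(\xi)(1+o(1))=O(\rho)=o(1)$, so the prefactor $1-O(\rho)$ can be divided out without affecting the leading constant, giving $D=\frac{2\rho}{n+1}\sum_j b_{2j+1}\psi_+^{\,n-2j}(1+o(1))$; substituting $\psi_+^{\,n-2j}=\rho^{\,n-2j}(1+o(1))$ and $\rho^{\,n+1-2j}=\hat r^{\frac{(n+1-2j)(m+1)}{2(n+1)}}$ produces the stated expansion. The main obstacle is precisely this self-consistency step: one must verify that both the feedback term $p(\psi_-)-p(\psi_+)$, which couples $D$ back into its own equation, and the quadratic remainders from the binomial expansion (whose common even-index parts cancel in the difference) are genuinely of higher order, so that they change neither the exponent nor the coefficient $\frac{2}{n+1}$. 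The fractional-power non-smoothness is by comparison a minor nuisance, dealt with once and for all by the $u$-substitution.
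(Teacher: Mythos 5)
Your argument is correct and is essentially the paper's own: the paper proves this lemma simply by invoking the Implicit Function Theorem and stating that it goes ``in similar fashion'' to Lemma~\ref{label-LemmaPhi}, and your $u$-substitution, binomial expansion, exact cancellation of the even-index coefficients at the common evaluation point $\psi_+$, and mean-value treatment of the feedback term $p(\psi_-)-p(\psi_+)$ is precisely that argument written out carefully. One harmless slip: $p(r),q(r)=O(r)$ holds not because $b_0=b_1=0$, but simply because every exponent $n+1-k$ with $0\le k\le n$ is at least $1$ (indeed $b_0$ and $b_1$ multiply the \emph{highest} powers $r^{n+1}$ and $r^{n}$), so no assumption is needed there.
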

In the rest of this section we will work with $J_\pm(\hat r):=\bar J_\pm(\psi_\pm (\hat r))$ where $\bar J_-$ (resp. $\bar J_+$) is defined in \eqref{SDI999-} (resp. \eqref{SDI999+}).
Assume that a sequence $(\hat r_l)_{l\in\mathbb{N}}$, defined by
\begin{equation}\label{bitno3>-----}
  J_-(\hat r_l)-J_+(\hat r_{l+1})=0 \ \left(\text{i.e., } J_-(\hat r_l)-J_+(\hat r_{l})=J_+(\hat r_{l+1}) -J_+(\hat r_{l})\right), 
\end{equation}
with $\hat r_0>0$ and  $l\in\mathbb N$, monotonically tends to zero as $l\to \infty$. Later it will be clear when this is possible (for each initial point $\hat r_0>0$ small enough).

First we focus on $J_-(\hat r_l)-J_+(\hat r_{l})$. Similarly to \eqref{alig-J} we have 
\begin{align}\label{alig-J>-----}
 &\frac{\left(n+1+\sum_{k=1}^{n}kb_ks^{n+1-k}\right)^2}{1+ \sum_{k=0}^{m-1}a_ks^{m-k}}  \nonumber\\
 & \qquad =\frac{\left(n+1+\sum_{k=1}^{n}kb_k(-1)^ks^{n+1-k}+2\sum_{j=0}^{\frac{n-1}{2}}(2j+1)b_{2j+1}s^{n-2j}\right)^2}{1+ \sum_{k=0}^{m-1}a_k(-1)^{k+1}s^{m-k}+2\sum_{j=0}^{\frac{m-1}{2}}a_{2j}s^{m-2j}}\nonumber\\
 & \qquad = \frac{\left(n+1+\sum_{k=1}^{n}kb_k(-1)^ks^{n+1-k}\right)^2}{1+ \sum_{k=0}^{m-1}a_k(-1)^{k+1}s^{m-k}}-\sum_{j=0}^{\frac{m-1}{2}}a_{2j}s^{m-2j}(2(n+1)^2+O(s))\nonumber\\
      & \qquad \ \ \ +\sum_{j=0}^{\frac{n-1}{2}}b_{2j+1}s^{n-2j}(4(n+1)(2j+1)+O(s)).
\end{align}
Now we get 
\begin{align}\label{alig-J>++++++}
 &J_-(\hat r_l)-J_+(\hat r_{l})=-\int_{0}^{ \psi_- (\hat r_l)}\frac{s^{m-2n-2}\left(n+1+\sum_{k=1}^{n}kb_ks^{n+1-k}\right)^2}{1+ \sum_{k=0}^{m-1}a_ks^{m-k}}ds  \nonumber\\
 & \  \ \ \
 +\int_{0}^{ \psi_+ (\hat r_l)}\frac{s^{m-2n-2}\left(n+1+\sum_{k=1}^{n}kb_k(-1)^ks^{n+1-k}\right)^2}{1+ \sum_{k=0}^{m-1}a_k(-1)^{k+1}s^{m-k}}ds\nonumber\\
 & \  = -\int_{0}^{ \psi_+ (\hat r_l)}s^{m-2n-2}\Bigg(\frac{\left(n+1+\sum_{k=1}^{n}kb_ks^{n+1-k}\right)^2}{1+ \sum_{k=0}^{m-1}a_ks^{m-k}}\nonumber\\
 & \qquad\qquad\qquad\qquad \qquad\qquad - \frac{\left(n+1+\sum_{k=1}^{n}kb_k(-1)^ks^{n+1-k}\right)^2}{1+ \sum_{k=0}^{m-1}a_k(-1)^{k+1}s^{m-k}}\Bigg)ds\nonumber\\
 & \ \ \ \ -\sum_{j=0}^{\frac{n-1}{2}}b_{2j+1}\hat r_l^{\frac{(m-n-2j-1)(m+1)}{2(n+1)}}(2(n+1)+o(1))\nonumber \\
 & \ = \sum_{j=0}^{\frac{m-1}{2}}a_{2j}\frac{2(n+1)^2}{2m-2n-2j-1} \hat r_{l}^{\frac{(2m-2n-2j-1)(m+1)}{2(n+1)}}(1+o(1))\nonumber \\ 
 & \ \ \ \ -\sum_{j=0}^{\frac{n-1}{2}}b_{2j+1}\frac{2(n+1)(m-n+2j+1)}{m-n-2j-1} \hat r_l^{\frac{(m-n-2j-1)(m+1)}{2(n+1)}}(1+o(1))
\end{align}
where $o(1)$-functions tend to $0$ when $\hat r_l\to 0$. In the second step we used Lemma \ref{label-Lemmapsi} and in the last step Lemma \ref{label-Lemmapsi} and \eqref{alig-J>-----}.

Using \eqref{SDI999+}, the term $J_+(\hat r_{l+1}) -J_+(\hat r_{l})$ in \eqref{bitno3>-----} can be written as
\begin{equation}\label{desno+++++>}
J_+(\hat r_{l+1}) -J_+(\hat r_{l})=(n+1)^2\int_{\psi_+(\hat r_{l+1})}^{\psi_+(\hat r_l)}s^{m-2n-2}(1+O(s))ds.
\end{equation}
If we apply the substitution $s=\psi_+(\hat r_l)t$ to \eqref{desno+++++>}, and use \eqref{bitno3>-----} and \eqref{alig-J>++++++}, then we see that $\frac{\psi_+(\hat r_{l+1})}{\psi_+(\hat r_{l})}\to 1$ (i.e. $\frac{\hat r_{l+1}}{\hat r_{l}}\to 1$) as $l\to\infty$ (see also Section \ref{m<frac}). On the other hand, if we use the substitution $s=\psi_+(t)$, then we get 
\begin{align}\label{sub2}
 &\int_{\psi_+(\hat r_{l+1})}^{\psi_+(\hat r_l)}s^{m-2n-2}(1+O(s))ds\nonumber\\
 & \ \ \  =\frac{m+1}{2(n+1)}\int_{\hat r_{l+1}}^{\hat r_l}t^{\left(m-2n-2+\frac{m-2n-1}{m+1}\right)\frac{m+1}{2(n+1)}}(1+o(1))dt
\end{align}
where $o(1)\to 0$ as $t\to 0$. Now, from \eqref{sub2} and the fact that $\frac{\hat r_{l+1}}{\hat r_{l}}\to 1$ as $l\to\infty$ it follows that 
\begin{equation}
\label{subsub2}
\frac{1}{\hat r_l^{\left(m-2n-1\right)\frac{m+1}{2(n+1)}-1}}\int_{\psi_+(\hat r_{l+1})}^{\psi_+(\hat r_l)}s^{m-2n-2}(1+O(s))ds\simeq \hat r_l-\hat r_{l+1}, \ \ l\to\infty.
\end{equation}

In cases (a)--(d) below we suppose that at least one of $j_a$ 
 and $j_b$ is well-defined.
\paragraph{(a) the case $n-2j_b<m-2j_a$ or $a_e=0$.} Because $n-2j_b<m-2j_a$ or $a_e=0$, \eqref{alig-J>++++++} implies that 
\begin{equation}\label{eqcase1>>>>>>>>}
J_-(\hat r)-J_+(\hat r)=-b_{2j_b+1}\frac{2(n+1)(m-n+2j_b+1)}{m-n-2j_b-1} \hat r^{\frac{(m-n-2j_b-1)(m+1)}{2(n+1)}}(1+o(1))
\end{equation}
with $o(1)\to 0$ as $\hat r\to 0$.
 Assume that $b_{2j_b+1}<0$. From \eqref{eqcase1>>>>>>>>} it follows that   $(\hat r_l)_{l\in\mathbb{N}}$ in \eqref{bitno3>-----} is well-defined for each small $\hat r_0>0$, i.e. it tends monotonically to zero as $l\to\infty$. Now, \eqref{bitno3>-----}, \eqref{desno+++++>}, \eqref{subsub2} and \eqref{eqcase1>>>>>>>>} give 
\begin{equation}\label{finally111><}
  \hat r_l-\hat r_{l+1}  \simeq \hat r_l^{(n-2j_b)\frac{m+1}{2(n+1)}+1}, \ l\to \infty.
\end{equation}
Using \cite[Theorem 1]{EZZ} and the fact that $(n-2j_b)\frac{m+1}{2(n+1)}+1>1$ in \eqref{finally111><} (because $j_b\le \frac{n-1}{2}$) we have that $(\hat r_l)_{l\in\mathbb{N}}$ is Minkowski nondegenerate,
\begin{equation}\label{boxdim4444><}
\dim_B(\hat r_l)_{l\in\mathbb{N}}=\frac{(n-2j_b)(m+1)}{(n-2j_b)(m+1)+2(n+1)}
\end{equation}
and these results are independent of the choice of $\hat r_0>0$.

Assume now that $b_{2j_b+1}>0$. Then we use $J_-(\hat r_{l+1})-J_+(\hat r_{l})=0$, instead of \eqref{bitno3>-----}, and we get \eqref{finally111><} and \eqref{boxdim4444><} in a similar way as above.

\paragraph{(b) the case $n-2j_b>m-2j_a$ or $b_o=0$.} Because $n-2j_b>m-2j_a$ or $b_o=0$, \eqref{alig-J>++++++} implies that
\begin{equation}\label{eqcase1>>>>>>>>_____}
J_-(\hat r)-J_+(\hat r)=a_{2j_a}\frac{2(n+1)^2}{2m-2n-2j_a-1} \hat r^{\frac{(2m-2n-2j_a-1)(m+1)}{2(n+1)}}(1+o(1))
\end{equation}
where $o(1)\to 0$ as $\hat r\to 0$.
 Assume that $a_{2j_a}>0$. \eqref{eqcase1>>>>>>>>_____} implies that $(\hat r_l)_{l\in\mathbb{N}}$ in \eqref{bitno3>-----} tends monotonically to zero as $l\to\infty$, for each small $\hat r_0>0$. Now, \eqref{bitno3>-----}, \eqref{desno+++++>}, \eqref{subsub2} and \eqref{eqcase1>>>>>>>>_____} give 
\begin{equation}\label{finally111><666}
  \hat r_l-\hat r_{l+1}  \simeq \hat r_l^{(m-2j_a)\frac{m+1}{2(n+1)}+1}, \ l\to \infty.
\end{equation}
Using \cite[Theorem 1]{EZZ} and $(m-2j_a)\frac{m+1}{2(n+1)}+1>1$ in \eqref{finally111><666} (note that $j_a\le \frac{m-1}{2}$) we have that $(\hat r_l)_{l\in\mathbb{N}}$ is Minkowski nondegenerate,
\begin{equation}\label{boxdim4444><9999}
\dim_B(\hat r_l)_{l\in\mathbb{N}}=\frac{(m-2j_a)(m+1)}{(m-2j_a)(m+1)+2(n+1)}
\end{equation}
and these results are independent of the choice of $\hat r_0>0$.

Assume now that $a_{2j_a}<0$. Then we deal with $J_-(\hat r_{l+1})-J_+(\hat r_{l})=0$, instead of \eqref{bitno3>-----}. Using similar steps as above we find  \eqref{finally111><666} and \eqref{boxdim4444><9999}.

 \paragraph{(c) the case $n-2j_b=m-2j_a$ and $C:=b_{2j_b+1}(m-n+2j_b+1)-a_{2j_a}(n+1)\ne 0$.} $n-2j_b=m-2j_a$ and \eqref{alig-J>++++++} imply
 \begin{equation}
J_-(\hat r)-J_+(\hat r)=-C\frac{2(n+1)}{m-n-2j_b-1} \hat r^{\frac{(m-n-2j_b-1)(m+1)}{2(n+1)}}(1+o(1)).\nonumber
\end{equation}
If $C<0$ (resp. $C>0$), then we have the same analysis and results (\eqref{finally111><} and \eqref{boxdim4444><}) as in case (a) with $b_{2j_b+1}<0$ (resp. $b_{2j_b+1}>0$).

\paragraph{(d) the case $n-2j_b=m-2j_a$ and $C=0$.} This is a topic of further study.

\paragraph{(e) the case $a_e=b_o=0$.} We deal with constant sequences $(\hat r_l)_{l\in\mathbb{N}}$, with trivial Minkowski dimension.

\section{Proof of the main results}
\label{sec-proofs}
In this section we prove Theorems \ref{thm-1}--\ref{thm-3} stated in Section \ref{sec-Motivation}. We use the fractal analysis from Section \ref{sectionFAI} and one important property of the notion of slow divergence integral (see \cite[Chapter 5]{DDR-book-SF}): its invariance under changes of coordinates and time reparameterizations. 

\subsection{Proof of Theorem \ref{thm-1}}\label{sectionproof1}
Let \eqref{model-Lienard2} satisfy \eqref{assum1} and \eqref{assum2}, and $m<2n+1$. We have that $m$ and $n$ are odd and $A=1$. We focus on the fractal analysis of a sequence $U=\{y_l \ | \ l\in\mathbb N\}$, defined by $I_-(y_{l})=I_+(y_{l+1})$ or $I_-(y_{l+1})=I_+(y_{l})$, that tends to $+\infty$, for each initial point $y_0>0$ large enough. Let's recall that $I_-$ (resp. $I_+$) is the slow divergence integral associated to the attracting (resp. repelling) portion of the curve of singularities $y=F(x)$ of \eqref{model-Lienard2}, with $\epsilon=0$, and that $I=I_--I_+$ (see Section \ref{sec-Motivation}).

System \eqref{model-Lienardm<y+}, used in Section \ref{m<frac}, is obtained from \eqref{model-Lienard2}, after the change of coordinates $x=\frac{\bar x}{r}, y=\frac{1}{r^{n+1}}$ and multiplication by $r^n$. We have $F\left(\frac{\Phi_\pm( r)}{ r}\right)=\frac{1}{r^{n+1}}$ where  $\bar x=\Phi_\pm(r)$ is the curve of singularities of \eqref{model-Lienardm<y+} when $\epsilon=0$ (see Section \ref{m<frac}). The slow divergence integral $J_\pm$ associated to $\bar x=\Phi_\pm(r)$ is given in \eqref{bitno2} ($\tilde r>0$ introduced in \eqref{bitno2} is a small constant).

If $U$ is defined by $I_-(y_{l})=I_+(y_{l+1})$ (resp. $I_-(y_{l+1})=I_+(y_{l})$) and if we write $r_l:=\frac{1}{y_l^\frac{1}{n+1}}$, then we get
\begin{align}\label{konacno111}
 I_-&(y_{l})-I_+(y_{l+1})=I_-\left(\frac{1}{r_l^{n+1}}\right)-I_+\left(\frac{1}{r_{l+1}^{n+1}}\right)\nonumber \\ 
 &=-\int_{\frac{\Phi_-(r_l)}{r_l}}^{0}\frac{F'(x)^2}{G(x)}dx+\int_{\frac{\Phi_+(r_{l+1})}{r_{l+1}}}^{0}\frac{F'(x)^2}{G(x)}dx\nonumber \\
 &=-\left(\int_{\frac{\Phi_-(r_l)}{r_l}}^{\frac{\Phi_-(\tilde r)}{\tilde r}}+\int_{\frac{\Phi_-(\tilde r)}{\tilde r}}^{0}\right)\frac{F'(x)^2}{G(x)}dx+\left(\int_{\frac{\Phi_+(r_{l+1})}{r_{l+1}}}^{\frac{\Phi_+(\tilde r)}{\tilde r}}+\int_{\frac{\Phi_+(\tilde r)}{\tilde r}}^{0}\right)\frac{F'(x)^2}{G(x)}dx\nonumber \\
  &=J_-(r_{l})-J_+(r_{l+1})+I\left(\frac{1}{\tilde r^{n+1}}\right)
\end{align}
\begin{equation}\label{konacno222}
    \left(\text{resp. }  I_-(y_{l+1})-I_+(y_{l})=J_-(r_{l+1})-J_+(r_{l})+I\left(\frac{1}{\tilde r^{n+1}}\right) \right).
\end{equation}
In the last step in \eqref{konacno111} we used the above mentioned invariance of the slow divergence integral: $-\int_{\frac{\Phi_\pm(r)}{r}}^{\frac{\Phi_\pm(\tilde r)}{\tilde r}}=J_\pm(r)$, $r<\tilde r$. If we write $\tilde J:=-I\left(\frac{1}{\tilde r^{n+1}}\right)$, then from \eqref{konacno111} (resp.\eqref{konacno222}) it follows that $I_-(y_{l})=I_+(y_{l+1})$ (resp. $I_-(y_{l+1})=I_+(y_{l})$) is equivalent with 
\begin{equation}\label{konacno33333}
    J_-(r_{l})-J_+(r_{l+1})=\tilde J \ \left(\text{resp. } J_-(r_{l+1})-J_+(r_{l})=\tilde J\right).
\end{equation}

Since the Minkowski dimension of $U$ is equal to the Minkowski dimension of $(r_l)_{l\in\mathbb N}$ (see \eqref{def-dim-1}), it suffices to study the Minkowski dimension of $(r_l)_{l\in\mathbb N}$ defined in \eqref{konacno33333}. This has been done in Section \ref{m<frac}.

\begin{remark}\label{remarkkonacno666}
    Similarly to \eqref{konacno111} we can prove that
    \[I(y)=I_-(y)-I_+(y)=J_-(r)-J_+(r)-\tilde J \]
    for all $y=\frac{1}{r^{n+1}}$ large enough.
\end{remark}

\paragraph{Proof of Theorem \ref{thm-1}.1} Suppose that $n-2j_b<m-2j_a$ or $a_e=0$. If $m-n-2j_b-1< 0$, then Theorem \ref{thm-1}.1(a) follows from case (a) in Section \ref{m<frac} and Remark \ref{remarkkonacno666}.  If $m-n-2j_b-1> 0$, then Theorem \ref{thm-1}.1(b) follows from case (b) in Section \ref{m<frac} (note that Remark \ref{remarkkonacno666} implies that $I(y)\to I_*:=\bar J-\tilde J$ as $y\to +\infty$, with $\bar J$ defined in \eqref{alig-JJJ} or \eqref{eqcase1}). 

\paragraph{Proof of Theorem \ref{thm-1}.2} Suppose that $n-2j_b>m-2j_a$ or $b_o=0$. If $2m-2n-2j_a-1<0$, then Theorem \ref{thm-1}.2(a) follows from case (c) in Section \ref{m<frac} and Remark \ref{remarkkonacno666}.  If $2m-2n-2j_a-1>0$, then Theorem \ref{thm-1}.2(b) follows from case (d) in Section \ref{m<frac} and Remark \ref{remarkkonacno666}. 

\paragraph{Proof of Theorem \ref{thm-1}.3} Suppose that $n-2j_b=m-2j_a$ and $C:=b_{2j_b+1}(m-n+2j_b+1)-a_{2j_a}(n+1)\ne 0$. Theorem \ref{thm-1}.3 follows from cases (e) and (f) in Section \ref{m<frac} and Remark \ref{remarkkonacno666}.

\subsection{Proof of Theorem \ref{thm-2}}\label{sectionproof2} 
Let \eqref{model-Lienard2} satisfy \eqref{assum1} and \eqref{assum2}, and $m=2n+1$. We have that $n$ is odd and $A>0$. Let $J_\pm$ be the slow divergence integral given in \eqref{bitno2====}, attached to system \eqref{model-Lienardm===y+}. We obtain \eqref{model-Lienardm===y+} if we apply $x=\frac{\bar x}{r}, y=\frac{1}{r^{n+1}}$ to \eqref{model-Lienard2}, upon multiplication by $r^n$. We get \eqref{konacno33333} and Remark \ref{remarkkonacno666} in the same way as in Section \ref{sectionproof1}.

Remark \ref{remarkbitno} and Remark \ref{remarkkonacno666} imply that $I(y)\to I_*:=\bar J-\tilde J$ as $y\to +\infty$.
Theorem \ref{thm-2}.1 follows from case (a) in Section \ref{section-analysism=}.
 Theorem \ref{thm-2}.2 follows from case (b) in Section \ref{section-analysism=}.
Theorem \ref{thm-2}.3 follows from case (c) in Section \ref{section-analysism=}.
Theorem \ref{thm-2}.4 follows from case (d) in Section \ref{section-analysism=}.

\subsection{Proof of Theorem \ref{thm-3}}\label{sectionproof3}
Let \eqref{model-Lienard2} satisfy \eqref{assum1} and \eqref{assum2}, and $m>2n+1$. We have that $m$ and $n$ are odd and $A=1$. Assume the equation \eqref{balanced-inf} from Section \ref{sec-Motivation} holds, i.e. $I(y)=I_-(y)-I_+(y)$ converges to $0$ as $y\to +\infty$. Like in Sections \ref{sectionproof1} and \ref{sectionproof2} we consider sequences $U=\{y_l \ | \ l\in\mathbb N\}$, defined by $I_-(y_{l})=I_+(y_{l+1})$ or $I_-(y_{l+1})=I_+(y_{l})$, that tend to $+\infty$.

Following Section \ref{section-inf-m>}, if we apply 
$x=\frac{1}{r}, \ y=\frac{\bar y}{r^{\frac{m+1}{2}}}$ (resp. $x=\frac{-1}{r}, \ y=\frac{\bar y}{r^{\frac{m+1}{2}}}$) to \eqref{model-Lienard2}, then we get \eqref{model-Lienardm>+} (resp. system \eqref{model-Lienardm>-}) after multiplication by $r^\frac{m-1}{2}$.
It is clear that $F\left(\frac{1}{\psi_-(\hat r)}\right)=\frac{1}{\hat r^{\frac{m+1}{2}} }$ and $F\left(\frac{-1}{\psi_+(\hat r)}\right)=\frac{1}{\hat r^{\frac{m+1}{2}} }$ where $\psi_-$ and $\psi_+$ are defined after \eqref{rhatr1} and \eqref{rhatr2} and $y=F(x)$ is the curve of singularities of \eqref{model-Lienard2}. $J_-$ and $J_+$ are defined after Lemma \ref{label-Lemmapsi}.

If $U$ is defined by $I_-(y_{l})=I_+(y_{l+1})$ (resp. $I_-(y_{l+1})=I_+(y_{l})$) and if we write $\hat r_l:=\frac{1}{y_l^\frac{2}{m+1}}$, then we get
\begin{align}\label{konacnom>>>111}
 I_-&(y_{l})-I_+(y_{l+1})=I_-\left(\frac{1}{\hat r_l^{\frac{m+1}{2}} }\right)-I_+\left(\frac{1}{\hat r_{l+1}^{\frac{m+1}{2}} }\right)\nonumber \\ 
 &=-\int_{\frac{1}{\psi_-(\hat r_l)}}^{0}\frac{F'(x)^2}{G(x)}dx+\int_{\frac{-1}{\psi_+(\hat r_{l+1})}}^{0}\frac{F'(x)^2}{G(x)}dx\nonumber \\
 &=-\left(\int_{+\infty}^{0}-\int_{+\infty}^{\frac{1}{\psi_-(\hat r_l)}}\right)\frac{F'(x)^2}{G(x)}dx+\left(\int_{-\infty}^{0}-\int_{-\infty}^{\frac{-1}{\psi_+(\hat r_{l+1})}}\right)\frac{F'(x)^2}{G(x)}dx\nonumber \\
  &=-\left(J_-(\hat r_{l})-J_+(\hat r_{l+1})\right) 
\end{align}
\begin{equation}\label{konacno>>>>222}
    \left(\text{resp. }  I_-(y_{l+1})-I_+(y_{l})=-\left(J_-(\hat r_{l+1})-J_+(\hat r_{l})\right)\right).
\end{equation}
In the last step in \eqref{konacnom>>>111} and \eqref{konacno>>>>222} we used \eqref{balanced-inf} and the invariance of the slow divergence integral: $-\int_{+\infty}^{\frac{1}{\psi_-(\hat r)}}=J_-(\hat r)$ and $-\int_{-\infty}^{\frac{-1}{\psi_+(\hat r)}}=J_+(\hat r)$.

From \eqref{def-dim-2}, \eqref{konacnom>>>111} and \eqref{konacno>>>>222} it follows that it suffices to study the Minkowski dimension of $(\hat r_l)_{l\in\mathbb N}$ defined by $J_-(\hat r_{l})-J_+(\hat r_{l+1})=0$ or $J_-(\hat r_{l+1})-J_+(\hat r_{l})=0$. This has been done in Section \ref{section-analysism>}.

Using the above analysis, Theorem \ref{thm-3}.1 follows from case (a) in Section \ref{section-analysism>}, Theorem \ref{thm-3}.2 follows from case (b) in Section \ref{section-analysism>} and Theorem \ref{thm-3}.3 follows from case (c) in Section \ref{section-analysism>}.

\appendix

\section{Family blow-up near infinity for $m>2n+1$}
\label{appendix}
\paragraph{$m$ is odd.} To desingularize \eqref{model-Lienardm>+}, we use the family blow-up \eqref{fam-blow-up} at the origin in $(r,\bar y,\epsilon)$-space. We use different charts. In the family chart $\{\tilde \epsilon=1\}$ we have 
\[(r,\bar y,\epsilon)=(v\tilde r,v^{\frac{m+1}{2}-n-1}\tilde y,v^{m-2n-1})\]
where $(\tilde r,\tilde y)$ is kept in a large compact set. System \eqref{model-Lienardm>+} changes, after division by $v^{\frac{m+1}{2}-n-1}$ and $v\to 0$, into 
\begin{equation}
\label{model-Lienardm>family}
    \begin{vf}
        \dot{\tilde r} &=& -\tilde r\left(\tilde y-\tilde r^{\frac{m+1}{2}-n-1}\right)   \\
        \dot{\tilde y} &=&-A-\frac{m+1}{2}\tilde y \left(\tilde y-\tilde r^{\frac{m+1}{2}-n-1}\right).
    \end{vf}
\end{equation}
When $A=1$, system \eqref{model-Lienardm>family} has no singularities. When $A=-1$, \eqref{model-Lienardm>family} has an attracting node at $(\tilde r,\tilde y)=(0, \sqrt{\frac{2}{m+1}})$  with eigenvalues $(-\sqrt{\frac{2}{m+1}},-\sqrt{2(m+1)})$ and a repelling node at $(\tilde r,\tilde y)=(0, -\sqrt{\frac{2}{m+1}})$  with eigenvalues $(\sqrt{\frac{2}{m+1}},\sqrt{2(m+1)})$.

In the phase directional chart $\{\tilde y=1\}$ we have 
\[(r,\bar y,\epsilon)=(v\tilde r,v^{\frac{m+1}{2}-n-1},v^{m-2n-1}\tilde \epsilon).\]
System \eqref{model-Lienardm>+} changes, after dividing by $v^{\frac{m+1}{2}-n-1}$, into 
\begin{equation}
\label{model-Lienardm>phasey+}
    \begin{vf}
        \dot{\tilde r} &=& \frac{2}{m-2n-1}\tilde r \left(\tilde \epsilon A+O(\tilde r v \tilde \epsilon)\right) +\frac{2(n+1)}{m-2n-1}\tilde r \Psi(\tilde r,v) \\
        \dot{v} &=&-\frac{2}{m-2n-1}v\left(\tilde \epsilon A+O(\tilde r v \tilde \epsilon)+\frac{m+1}{2}\Psi(\tilde r,v)\right)\\
        \dot{\tilde \epsilon} &=&2\tilde \epsilon\left(\tilde \epsilon A+O(\tilde r v \tilde \epsilon)+\frac{m+1}{2}\Psi(\tilde r,v)\right),
    \end{vf}
\end{equation}
where $\Psi(\tilde r,v)=1-\tilde r^{\frac{m+1}{2}-n-1}(1+O(\tilde r v))$. When $v=\tilde\epsilon=0 $, \eqref{model-Lienardm>phasey+} has a hyperbolic saddle at $\tilde r=0$ with eigenvalues $(\frac{2(n+1)}{m-2n-1},-\frac{m+1}{m-2n-1},m+1)$ and a semi-hyperbolic singularity at $\tilde r=1$ with the stable manifold $\{v=\tilde\epsilon=0 \}$ and a two-dimensional center manifold transverse to the stable manifold. Using asymptotic expansions in $\tilde \epsilon$ and the fact that the curve of singularities of \eqref{model-Lienardm>phasey+} is $\{\Psi(\tilde r,v)=0,\tilde\epsilon=0\}$,  the dynamics inside  center manifolds is given by $\{\dot v=v\tilde\epsilon\left(\frac{A}{n+1}+O(v,\tilde\epsilon)\right),\dot{\tilde\epsilon}=-\tilde\epsilon^2\left(\frac{A(m-2n-1)}{n+1}+O(v,\tilde\epsilon)\right)\}$.

In the phase directional chart $\{\tilde y=-1\}$ we have 
\[(r,\bar y,\epsilon)=(v\tilde r,-v^{\frac{m+1}{2}-n-1},v^{m-2n-1}\tilde \epsilon).\]
System \eqref{model-Lienardm>+} changes, after dividing by $v^{\frac{m+1}{2}-n-1}$, into 
\begin{equation}
\label{model-Lienardm>phasey-}
    \begin{vf}
        \dot{\tilde r} &=&- \frac{2}{m-2n-1}\tilde r \left(\tilde \epsilon A+O(\tilde r v \tilde \epsilon)\right) -\frac{2(n+1)}{m-2n-1}\tilde r \Psi(\tilde r,v) \\
        \dot{v} &=&\frac{2}{m-2n-1}v\left(\tilde \epsilon A+O(\tilde r v \tilde \epsilon)+\frac{m+1}{2}\Psi(\tilde r,v)\right)\\
        \dot{\tilde \epsilon} &=&-2\tilde \epsilon\left(\tilde \epsilon A+O(\tilde r v \tilde \epsilon)+\frac{m+1}{2}\Psi(\tilde r,v)\right),
    \end{vf}
\end{equation}
where $\Psi(\tilde r,v)=1+\tilde r^{\frac{m+1}{2}-n-1}(1+O(\tilde r v))$. When $v=\tilde\epsilon=0 $, \eqref{model-Lienardm>phasey-} has a hyperbolic saddle at $\tilde r=0$ with eigenvalues $(-\frac{2(n+1)}{m-2n-1},\frac{m+1}{m-2n-1},-(m+1))$.

We find one extra singularity in the phase directional chart $\{\tilde r=1\}$
\[(r,\bar y,\epsilon)=(v,v^{\frac{m+1}{2}-n-1}\tilde y,v^{m-2n-1}\tilde \epsilon).\]
System \eqref{model-Lienardm>+} changes, after dividing by $v^{\frac{m+1}{2}-n-1}$, into 
\begin{equation}
\label{model-Lienardm>phaser}
    \begin{vf}
        \dot{\tilde y} &=&-\tilde \epsilon A+O( v \tilde \epsilon) -(n+1)\tilde y\left(\tilde y-1+O(v)\right) \\
        \dot{v} &=&-v\left(\tilde y-1+O(v)\right)\\
        \dot{\tilde \epsilon} &=&(m-2n-1)\tilde\epsilon\left(\tilde y-1+O(v)\right).
    \end{vf}
\end{equation}
 When $v=\tilde\epsilon=0 $, \eqref{model-Lienardm>phaser} has a hyperbolic saddle at $\tilde y=0$ with eigenvalues $(n+1,1,2n+1-m)$.
\smallskip

To desingularize \eqref{model-Lienardm>-}, we use the family blow-up \eqref{fam-blow-up} at the origin in $(r,\bar y,\epsilon)$-space. As usual we work with different charts. In the family chart $\{\tilde \epsilon=1\}$ system \eqref{model-Lienardm>-} changes, after division by $v^{\frac{m+1}{2}-n-1}$ and $v\to 0$, into 
\begin{equation}
\label{model-Lienardm>family5}
    \begin{vf}
        \dot{\tilde r} &=& \tilde r\left(\tilde y+(-1)^n\tilde r^{\frac{m+1}{2}-n-1}\right)   \\
        \dot{\tilde y} &=&A+\frac{m+1}{2}\tilde y \left(\tilde y+(-1)^n\tilde r^{\frac{m+1}{2}-n-1}\right).
    \end{vf}
\end{equation}
When $A=1$, system \eqref{model-Lienardm>family5} has no singularities. When $A=-1$, \eqref{model-Lienardm>family5} has a repelling node at $(\tilde r,\tilde y)=(0, \sqrt{\frac{2}{m+1}})$  with the eigenvalues $\sqrt{\frac{2}{m+1}}(1,m+1)$ and an attracting node at $(\tilde r,\tilde y)=(0, -\sqrt{\frac{2}{m+1}})$  with the eigenvalues $-\sqrt{\frac{2}{m+1}}(1,m+1)$.

In the phase directional chart $\{\tilde y=1\}$ system \eqref{model-Lienardm>-} changes, after dividing by $v^{\frac{m+1}{2}-n-1}$, into 
\begin{equation}
\label{model-Lienardm>phasey+5}
    \begin{vf}
        \dot{\tilde r} &=& -\frac{2}{m-2n-1}\tilde r \left(\tilde \epsilon A+O(\tilde r v \tilde \epsilon)\right) -\frac{2(n+1)}{m-2n-1}\tilde r \Psi(\tilde r,v) \\
        \dot{v} &=&\frac{2}{m-2n-1}v\left(\tilde \epsilon A+O(\tilde r v \tilde \epsilon)+\frac{m+1}{2}\Psi(\tilde r,v)\right)\\
        \dot{\tilde \epsilon} &=&-2\tilde \epsilon\left(\tilde \epsilon A+O(\tilde r v \tilde \epsilon)+\frac{m+1}{2}\Psi(\tilde r,v)\right),
    \end{vf}
\end{equation}
where $\Psi(\tilde r,v)=1+\tilde r^{\frac{m+1}{2}-n-1}((-1)^n+O(\tilde r v))$. When $v=\tilde\epsilon=0 $, \eqref{model-Lienardm>phasey+5} has a hyperbolic saddle at $\tilde r=0$ with eigenvalues $(-\frac{2(n+1)}{m-2n-1},\frac{m+1}{m-2n-1},-(m+1))$ and, if $n$ is odd, a semi-hyperbolic singularity at $\tilde r=1$ with the unstable manifold $\{v=\tilde\epsilon=0 \}$ and a two-dimensional center manifold transverse to the unstable manifold. The dynamics inside  center manifolds is given by $\{\dot v=v\tilde\epsilon\left(-\frac{A}{n+1}+O(v,\tilde\epsilon)\right),\dot{\tilde\epsilon}=\tilde\epsilon^2\left(\frac{A(m-2n-1)}{n+1}+O(v,\tilde\epsilon)\right)\}$.

In the phase directional chart $\{\tilde y=-1\}$ system \eqref{model-Lienardm>-} changes, after dividing by $v^{\frac{m+1}{2}-n-1}$, into 
\begin{equation}
\label{model-Lienardm>phasey-5}
    \begin{vf}
        \dot{\tilde r} &=& \frac{2}{m-2n-1}\tilde r \left(\tilde \epsilon A+O(\tilde r v \tilde \epsilon)\right) +\frac{2(n+1)}{m-2n-1}\tilde r \Psi(\tilde r,v) \\
        \dot{v} &=&-\frac{2}{m-2n-1}v\left(\tilde \epsilon A+O(\tilde r v \tilde \epsilon)+\frac{m+1}{2}\Psi(\tilde r,v)\right)\\
        \dot{\tilde \epsilon} &=&2\tilde \epsilon\left(\tilde \epsilon A+O(\tilde r v \tilde \epsilon)+\frac{m+1}{2}\Psi(\tilde r,v)\right),
    \end{vf}
\end{equation}
where $\Psi(\tilde r,v)=1-\tilde r^{\frac{m+1}{2}-n-1}((-1)^n+O(\tilde r v))$. When $v=\tilde\epsilon=0 $, \eqref{model-Lienardm>phasey-5} has a hyperbolic saddle at $\tilde r=0$ with eigenvalues $(\frac{2(n+1)}{m-2n-1},-\frac{m+1}{m-2n-1},m+1)$ and, if $n$ is even, a semi-hyperbolic singularity at $\tilde r=1$ with the stable manifold $\{v=\tilde\epsilon=0 \}$ and a two-dimensional center manifold transverse to the stable manifold. The dynamics inside  center manifolds is given by $\{\dot v=v\tilde\epsilon\left(\frac{A}{n+1}+O(v,\tilde\epsilon)\right),\dot{\tilde\epsilon}=\tilde\epsilon^2\left(-\frac{A(m-2n-1)}{n+1}+O(v,\tilde\epsilon)\right)\}$.

We find one extra singularity in the phase directional chart $\{\tilde r=1\}$.
System \eqref{model-Lienardm>-} changes, after dividing by $v^{\frac{m+1}{2}-n-1}$, into 
\begin{equation}
\label{model-Lienardm>phaser5}
    \begin{vf}
        \dot{\tilde y} &=&\tilde \epsilon A+O( v \tilde \epsilon) +(n+1)\tilde y\left(\tilde y+(-1)^n+O(v)\right) \\
        \dot{v} &=&v\left(\tilde y+(-1)^n+O(v)\right)\\
        \dot{\tilde \epsilon} &=&-(m-2n-1)\tilde\epsilon\left(\tilde y+(-1)^n+O(v)\right).
    \end{vf}
\end{equation}
 When $v=\tilde\epsilon=0 $, \eqref{model-Lienardm>phaser5} has a hyperbolic saddle at $\tilde y=0$ with eigenvalues $(-1)^n(n+1,1,2n+1-m)$.

 \smallskip

 \paragraph{$m$ is even.} To desingularize \eqref{model-Lienardmeven>+}, we use the family blow-up \eqref{fam-blow-up--} at the origin in $(r,\bar y,\epsilon)$-space. In the family chart $\{\tilde \epsilon=1\}$ system \eqref{model-Lienardmeven>+} changes, after division by $v^{m-2n-1}$ and $v\to 0$, into 
\begin{equation}
\label{model-Lienardmeven>family}
    \begin{vf}
        \dot{\tilde r} &=& -\frac{1}{2}\tilde r\left(\tilde y-\tilde r^{m-2n-1}\right)   \\
        \dot{\tilde y} &=&-A-\frac{m+1}{2}\tilde y \left(\tilde y-\tilde r^{m-2n-1}\right).
    \end{vf}
\end{equation}
System \eqref{model-Lienardmeven>family} has no singularities because $A=1$.

In the phase directional chart $\{\tilde y=1\}$ \eqref{model-Lienardmeven>+} changes, after dividing by $v^{m-2n-1}$, into 
\begin{equation}
\label{model-Lienardm>phaseeveny+}
    \begin{vf}
        \dot{\tilde r} &=& \frac{1}{m-2n-1}\tilde r \left(\tilde \epsilon A+O(\tilde r v \tilde \epsilon)\right) +\frac{n+1}{m-2n-1}\tilde r \Psi(\tilde r,v) \\
        \dot{v} &=&-\frac{1}{m-2n-1}v\left(\tilde \epsilon A+O(\tilde r v \tilde \epsilon)+\frac{m+1}{2}\Psi(\tilde r,v)\right)\\
        \dot{\tilde \epsilon} &=&2\tilde \epsilon\left(\tilde \epsilon A+O(\tilde r v \tilde \epsilon)+\frac{m+1}{2}\Psi(\tilde r,v)\right),
    \end{vf}
\end{equation}
where $\Psi(\tilde r,v)=1-\tilde r^{m-2n-1}(1+O(\tilde r v))$. When $v=\tilde\epsilon=0 $, \eqref{model-Lienardm>phaseeveny+} has a hyperbolic saddle at $\tilde r=0$ with eigenvalues $(\frac{n+1}{m-2n-1},-\frac{m+1}{2(m-2n-1)},m+1)$ and a semi-hyperbolic singularity at $\tilde r=1$ with the stable manifold $\{v=\tilde\epsilon=0 \}$ and a two-dimensional center manifold transverse to the stable manifold. The dynamics inside center manifolds is given by $\{\dot v=v\tilde\epsilon\left(\frac{A}{2(n+1)}+O(v,\tilde\epsilon)\right),\dot{\tilde\epsilon}=-\tilde\epsilon^2\left(\frac{A(m-2n-1)}{n+1}+O(v,\tilde\epsilon)\right)\}$.

Since $m-2n-1$ is odd, we can cover the phase directional chart $\{\tilde y=-1\}$ by applying $(t,\tilde r,v)\mapsto (-t,-\tilde r,-v)$ to \eqref{model-Lienardm>phaseeveny+}. When $v=\tilde\epsilon=0 $, we find
 a hyperbolic saddle at $\tilde r=0$ with eigenvalues $(-\frac{n+1}{m-2n-1},\frac{m+1}{2(m-2n-1)},-(m+1))$.

We find one extra singularity in the phase directional chart $\{\tilde r=1\}$
in which system \eqref{model-Lienardmeven>+} changes, after dividing by $v^{m-2n-1}$, into 
\begin{equation}
\label{model-Lienardmeven>phaser}
    \begin{vf}
        \dot{\tilde y} &=&-\tilde \epsilon A+O( v \tilde \epsilon) -(n+1)\tilde y\left(\tilde y-1+O(v)\right) \\
        \dot{v} &=&-\frac{1}{2}v\left(\tilde y-1+O(v)\right)\\
        \dot{\tilde \epsilon} &=&(m-2n-1)\tilde\epsilon\left(\tilde y-1+O(v)\right).
    \end{vf}
\end{equation}
 When $v=\tilde\epsilon=0 $, \eqref{model-Lienardmeven>phaser} has a hyperbolic saddle at $\tilde y=0$ with eigenvalues $(n+1,\frac{1}{2},2n+1-m)$.
\smallskip

To desingularize \eqref{model-Lienardmeven>-}, we use the family blow-up \eqref{fam-blow-up--} at the origin in $(r,\bar y,\epsilon)$-space. In the family chart $\{\tilde \epsilon=1\}$ system \eqref{model-Lienardmeven>-} changes, after division by $v^{m-2n-1}$ and $v\to 0$, into 
\begin{equation}
\label{model-Lienardm>family10}
    \begin{vf}
        \dot{\tilde r} &=&\frac{1}{2} \tilde r\left(\tilde y+(-1)^n\tilde r^{m-2n-1}\right)   \\
        \dot{\tilde y} &=&-A+\frac{m+1}{2}\tilde y \left(\tilde y+(-1)^n\tilde r^{m-2n-1}\right).
    \end{vf}
\end{equation}
Since $A=1$, system \eqref{model-Lienardm>family10} has a repelling node at $(\tilde r,\tilde y)=(0, \sqrt{\frac{2}{m+1}})$  with the eigenvalues $\sqrt{\frac{2}{m+1}}(\frac{1}{2},m+1)$ and an attracting node at $(\tilde r,\tilde y)=(0, -\sqrt{\frac{2}{m+1}})$  with the eigenvalues $-\sqrt{\frac{2}{m+1}}(\frac{1}{2},m+1)$.

In the phase directional chart $\{\tilde y=1\}$ system \eqref{model-Lienardmeven>-} changes, after dividing by $v^{m-2n-1}$, into 
\begin{equation}
\label{model-Lienardm>phasey+10}
    \begin{vf}
        \dot{\tilde r} &=& \frac{1}{m-2n-1}\tilde r \left(\tilde \epsilon A+O(\tilde r v \tilde \epsilon)\right) -\frac{n+1}{m-2n-1}\tilde r \Psi(\tilde r,v) \\
        \dot{v} &=&\frac{1}{m-2n-1}v\left(-\tilde \epsilon A+O(\tilde r v \tilde \epsilon)+\frac{m+1}{2}\Psi(\tilde r,v)\right)\\
        \dot{\tilde \epsilon} &=&-2\tilde \epsilon\left(-\tilde \epsilon A+O(\tilde r v \tilde \epsilon)+\frac{m+1}{2}\Psi(\tilde r,v)\right),
    \end{vf}
\end{equation}
where $\Psi(\tilde r,v)=1+\tilde r^{m-2n-1}((-1)^n+O(\tilde r v))$. When $v=\tilde\epsilon=0 $, \eqref{model-Lienardm>phasey+10} has a hyperbolic saddle at $\tilde r=0$ with eigenvalues $(-\frac{n+1}{m-2n-1},\frac{m+1}{2(m-2n-1)},-(m+1))$ and, if $n$ is odd, a semi-hyperbolic singularity at $\tilde r=1$ with the unstable manifold $\{v=\tilde\epsilon=0 \}$ and a two-dimensional center manifold transverse to the unstable manifold. The dynamics inside  center manifolds is given by $\{\dot v=v\tilde\epsilon\left(\frac{A}{2(n+1)}+O(v,\tilde\epsilon)\right),\dot{\tilde\epsilon}=-\tilde\epsilon^2\left(\frac{A(m-2n-1)}{n+1}+O(v,\tilde\epsilon)\right)\}$.

We cover the phase directional chart $\{\tilde y=-1\}$ by applying $(t,\tilde r,v)\mapsto (-t,-\tilde r,-v)$ to \eqref{model-Lienardm>phasey+10}. When $v=\tilde\epsilon=0 $, we find a hyperbolic saddle at $\tilde r=0$ with eigenvalues $(\frac{n+1}{m-2n-1},-\frac{m+1}{2(m-2n-1)},m+1)$ and, if $n$ is even, a semi-hyperbolic singularity at $\tilde r=1$ with the stable manifold $\{v=\tilde\epsilon=0 \}$ and a two-dimensional center manifold transverse to the stable manifold. The dynamics inside  center manifolds is given by $\{\dot v=-v\tilde\epsilon\left(\frac{A}{2(n+1)}+O(v,\tilde\epsilon)\right),\dot{\tilde\epsilon}=\tilde\epsilon^2\left(\frac{A(m-2n-1)}{n+1}+O(v,\tilde\epsilon)\right)\}$.

We find one extra singularity in the phase directional chart $\{\tilde r=1\}$.
System \eqref{model-Lienardmeven>-} changes, after dividing by $v^{m-2n-1}$, into 
\begin{equation}
\label{model-Lienardm>phaser100}
    \begin{vf}
        \dot{\tilde y} &=&-\tilde \epsilon A+O( v \tilde \epsilon) +(n+1)\tilde y\left(\tilde y+(-1)^n+O(v)\right) \\
        \dot{v} &=&\frac{1}{2}v\left(\tilde y+(-1)^n+O(v)\right)\\
        \dot{\tilde \epsilon} &=&-(m-2n-1)\tilde\epsilon\left(\tilde y+(-1)^n+O(v)\right).
    \end{vf}
\end{equation}
 When $v=\tilde\epsilon=0 $, \eqref{model-Lienardm>phaser100} has a hyperbolic saddle at $\tilde y=0$ with eigenvalues $(-1)^n(n+1,\frac{1}{2},2n+1-m)$.

\section*{Declarations}
 
\textbf{Ethical Approval} \ 
Not applicable.
 \\
\\
 \textbf{Competing interests} \  
The authors declare that they have no conflict of interest.\\
 \\
\textbf{Authors' contributions} \  All authors conceived of the presented idea, developed the theory, performed the computations and
contributed to the final manuscript.  \\ 
\\ 
\textbf{Funding} \
The research of R. Huzak and G. Radunovi\'{c}  was supported by: Croatian Science Foundation (HRZZ) grant
PZS-2019-02-3055 from “Research Cooperability” program funded by the European Social Fund. Additionally, the research of G. Radunovi\'{c} was partially
supported by the HRZZ grant UIP-2017-05-1020.\\
 \\
\textbf{Availability of data and materials}  \
Not applicable.

\bibliographystyle{plain}
\bibliography{bibtex}
\end{document}